\documentclass[11pt]{amsart}
\usepackage[margin=1.1in]{geometry}
\usepackage{amsmath,amssymb,amsthm}
\usepackage{tikz-cd}
\usepackage[colorlinks=true,linkcolor=blue,citecolor=blue]{hyperref}

\theoremstyle{plain}
\newtheorem{theorem}{Theorem}[section]
\newtheorem{lemma}[theorem]{Lemma}
\newtheorem{proposition}[theorem]{Proposition}
\newtheorem{corollary}[theorem]{Corollary}
\theoremstyle{definition}
\newtheorem{definition}[theorem]{Definition}
\newtheorem{notation}[theorem]{Notation}
\newtheorem{example}[theorem]{Example}
\newtheorem{remark}[theorem]{Remark}
\newtheorem*{theoremA}{Theorem A}
\newtheorem*{theoremB}{Theorem B}

\newcommand{\ann}{\operatorname{ann}}
\newcommand{\Soc}{\operatorname{Soc}}

\newcommand{\Z}{\mathbb{Z}}
\newcommand{\GE}{\Gamma_E}
\newcommand{\GA}{\Gamma_A}
\newcommand{\dg}{\operatorname{diam}}
\newcommand{\m}{\mathfrak{m}}

\begin{document}

\title[Zero-divisors of Gorenstein rings]{Zero-divisors of Gorenstein rings}

\author{Ganesh S. Kadu}
\address{Ganesh S. Kadu, Department of Mathematics, Savitribai Phule Pune University, Pune, India}
\email{ganeshkadu@gmail.com}

\author{Vishnu B. Tanpure}
\address{Vishnu B. Tanpure, Department of Mathematics, Savitribai Phule Pune University, Pune, India}
\email{vbtanpure@gmail.com}

\subjclass[2020]{Primary 13A70; Secondary 13E10, 05E25}
\keywords{Gorenstein rings, Artinian rings, Zero-divisor graphs, Compressed zero-divisor
graph, Associate class graph}

\begin{abstract}
Let $R$ be a commutative Artinian ring. We consider two graphs associated to $R$, namely the
compressed zero-divisor graph $\GE(R)$ and the associate class graph $\GA(R)$. Partitioning
the vertex set of a zero-divisor graph into its \emph{core} and its \emph{boundary}, we count
the core vertices that dominate the core. This count is a graph invariant, and we estimate it
for  $\Gamma(R)$, $\GA(R)$ and $\GE(R)$. We prove that the count for $\GA(R)$ is
bounded below by the count for $\GE(R)$, and that the lower bound is attained  precisely when 
$R$ is Gorenstein. As a consequence we obtain that $R$ is Gorenstein if and only if
$\GA(R)\cong\GE(R)$ as graphs, the isomorphism being an arbitrary one and not merely the
natural compression map. Using the same counting technique we then answer, for Artinian rings, a question of Anderson
and LaGrange by showing that $\Gamma(R)\cong\GE(R)$ if and only if
$R\cong\Z_2^{\,n}$ for some $n\ge2$, or $R\cong\Z_4$, or $R\cong\Z_2[x]/(x^2)$.
\end{abstract}

\maketitle

\section{Introduction}

Let $R$ be a commutative ring with unity. In \cite{Beck}, Beck associated to a ring $R$ a
simple graph with vertex set consisting of all elements of $R$, two vertices $a,b\in R$ being
adjacent whenever $ab=0$. This graph is known as the zero-divisor graph associated to a ring,
and since its introduction by Beck the zero-divisor graph has been a fruitful theme of
research. The zero-divisor graph was later modified by D. F. Anderson and P. S. Livingston in
\cite{AL} to include only the non-zero zero-divisors of $R$ as its vertex set, two vertices
being adjacent if their product is zero. This graph is denoted by $\Gamma(R)$. The study of
these graphs offers a rich interplay between the ring theoretic properties of $R$ and the
graph theoretic properties of $\Gamma(R)$, see \cite{ALB,LaG16,ALS,LaG07,LaG08,Red}.

A compressed version of $\Gamma(R)$ was introduced and studied by S. Mulay in \cite{Mulay} and
is denoted by $\GE(R)$. This graph, known as the compressed zero-divisor graph of $R$, is
obtained from $\Gamma(R)$ by letting two vertices $a,b$ be equivalent $(a\approx b)$ if they
have the same annihilators, i.e., $\ann(a)=\ann(b)$. Thus the vertex set of $\GE(R)$ is the
set of equivalence classes, termed annihilator classes, of vertices of $\Gamma(R)$, and the
adjacency is inherited from that in $\Gamma(R)$, i.e., a class $[a]_E$ is adjacent to $[b]_E$
if $ab=0$. We refer the reader to the article by J. Coykendall et al. \cite{CSSS} dealing with
various aspects of $\GE(R)$, and also to \cite{ALB,LaG16,Ax,Kadu,SW}. Tang, Lin and Wu
introduced a related notion in \cite{TLW}, where the non-zero zero-divisors $a, b$ in $R$ are equivalent $(a \sim b )$ if $a$ and $b$  are associates ($a=ub$ for some unit $u$ in $R$). The equivalence classes, termed as the associate classes, are the vertices of the graph $\Gamma_A(R)$ and two vertices $[a]_A$ and $[b]_A$ are adjacent if $ab=0$.  The compression with
respect to associate classes is finer than that with respect to annihilator classes, and hence
the graph $\GA(R)$, called the associate class graph, tends to reflect the algebra of $R$ more
accurately. For categorical properties of the functor $R\rightsquigarrow\GA(R)$ on finite
rings see A. Duric et al. \cite{DJS}.

For $a,b\in R$ the
relation of being associates (denoted $a\sim b$) is finer than the relation of having equal
annihilators, that is, $a\sim b$ implies $a\approx b$. Thus, every annihilator class of $R$ is a
disjoint union of associate classes, and $\GE(R)$ may be regarded as a compression of both
$\GA(R)$ and $\Gamma(R)$. The three graphs are compared by the natural maps $\eta,\psi,\varphi$
which form a commutative diagram $\eta=\varphi\circ\psi$,

\[
\begin{tikzcd}[column sep=large,row sep=large]
	\Gamma(R) \arrow[r,"\psi"] \arrow[dr,swap,"\eta"] & \GA(R)  \arrow[d,swap,"\varphi"]\\
	& \GE(R)
\end{tikzcd}
\]
where $\eta(a)=[a]_E$, $\psi(a)=[a]_A$ and $\varphi([a]_A)=[a]_E$.

Our first objective is the comparison of $\GE(R)$ and $\GA(R)$. In Theorem \ref{thm:gor-nat}
we show, for an Artinian ring $R$, that $\approx$ and $\sim$ coincide if and only if $R$ is
Gorenstein, and hence that natural map $\varphi$ is an isomorphism if and only if $R$ is Gorenstein. This
leaves open question of whether
the existence of some isomorphism $\GA(R)\cong\GE(R)$ also forces $R$ to be
Gorenstein. We answer this affirmatively. The method is a counting argument which we now
describe, and which is the technical heart of the paper.

For an Artinian ring one has $\dg\Gamma(R)\le3$, and we partition the vertex set of a graph
$G$ of diameter at most $3$ into its \emph{boundary}, consisting of those vertices which
realise the distance $3$, and its \emph{core}, consisting of the remaining ones. Since a graph
isomorphism preserves distances, this partition is preserved, and so is the set of \emph{core-dominating} vertices,
\[
   D(G)=\{\,v\in V(G) \mid \ v \text{ is adjacent to every vertex of } C(G)\setminus\{v\}\,\}.
\]
 In Theorem \ref{thm:core} we determine $C(\Gamma(R))$
completely for an arbitrary Artinian ring, and in Section \ref{sec:count} we compute
$|D(\Gamma(R))|$ and $|D(\GE(R))|$ and obtain a lower bound for $|D(\GA(R))|$. Writing
$R\cong R_1\times\cdots\times R_k\times K_1\times\cdots\times K_m$ with the $R_i$ non-reduced
Artinian local and the $K_j$ fields, and letting $P_i$ denote the number of associate classes
contained in $\Soc R_i\setminus\{0\}$, we obtain
\[
   |D(\GE(R))|=2^{\,k}-1+m+\delta,\qquad
   |D(\GA(R))|\;\ge\;\prod_{i=1}^{k}\bigl(1+P_i\bigr)-1+m ,
\]
with $\delta\in\{0,1\}$ as in Proposition \ref{prop:DE}. The point is that $P_i=1$ exactly
when $R_i$ is Gorenstein, while $P_i\ge3$ otherwise.  The lower bound is therefore attained
only in the Gorenstein case. This yields our first main theorem (Theorem \ref{thm:mainA}).

\begin{theoremA}
Let $R$ be an Artinian ring. Then $R$ is Gorenstein if and only if $\GA(R)\cong\GE(R)$ as
graphs.
\end{theoremA}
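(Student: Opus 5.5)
One direction is immediate from Theorem \ref{thm:gor-nat}. If $R$ is Gorenstein, then the relations $\approx$ and $\sim$ coincide on the nonzero zero-divisors. Hence the natural map $\varphi\colon\GA(R)\to\GE(R)$ is a bijection on vertices. It preserves and reflects adjacency, since in both graphs adjacency of two classes is defined by $ab=0$ for representatives. So $\varphi$ is a graph isomorphism.

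For the converse, suppose $\theta\colon\GA(R)\to\GE(R)$ is an arbitrary graph isomorphism. The plan is to use the invariant $|D(\cdot)|$ rather than $\varphi$. A graph isomorphism preserves distances, so it carries boundary to boundary and core to core, and therefore maps $D(\GA(R))$ bijectively onto $D(\GE(R))$. This gives $|D(\GA(R))|=|D(\GE(R))|$. Write $R\cong R_1\times\cdots\times R_k\times K_1\times\cdots\times K_m$ as in the introduction. Combining Proposition \ref{prop:DE} with the lower bound for $|D(\GA(R))|$ from Section \ref{sec:count}, we get
\[
\prod_{i=1}^{k}(1+P_i)-1+m\;\le\;2^{k}-1+m+\delta,\qquad \delta\in\{0,1\}.
\]
Each $P_i\ge1$, with $P_i=1$ exactly when $R_i$ is Gorenstein and $P_i\ge3$ otherwise. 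If some $R_i$ is not Gorenstein, the left side is at least $4\cdot 2^{k-1}-1+m=2^{k+1}-1+m$. This exceeds $2^k-1+m+\delta$ whenever $2^k>\delta$, which always holds when $k\ge1$. Hence every $R_i$ is Gorenstein. Fields are Gorenstein and a finite product of Gorenstein rings is Gorenstein, so $R$ is Gorenstein.

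The main obstacle is the role of $\delta$ and the degenerate cases. The inequality above is strict only once we know $2^{k+1}>2^k+\delta$, which is automatic for $k\ge1$. For $k=0$ the ring is reduced, hence already Gorenstein, so there is nothing to prove. The other point to check is that the lower bound for $|D(\GA(R))|$ and the formula for $|D(\GE(R))|$ are valid in every case, including small ones such as $R$ local (where $m=0$) or $R=K_1\times K_2$. It may also be necessary to verify that the graphs are nonempty, that is, $R$ is not a field, and that they have diameter at most $3$, so that the core/boundary partition is well defined. If $R$ is a field, both graphs are empty and $R$ is trivially Gorenstein. With these edge cases handled, the counting inequality gives the converse.
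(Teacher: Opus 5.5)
Your proposal is correct and follows the paper's proof of Theorem \ref{thm:mainA}. The forward direction goes through Theorem \ref{thm:gor-nat}, and the converse compares the graph invariant $|D(\cdot)|$ using Propositions \ref{prop:DE} and \ref{prop:DA} together with Lemma \ref{lem:socorbits}; the only cosmetic difference is that you handle $k=1$ and $k\ge2$ at once via $2^{k}>\delta$, where the paper splits into the two cases.
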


The notion of Gorenstein rings is well studied in commutative algebra (see \cite{Bass} by H. Bass).
For an Artinian local ring $(R,\mathfrak m,k)$ the Gorenstein condition simply says that
$\Soc R$ is a principal ideal, equivalently that $\dim_k\Soc R=1$. Theorem A thus gives a
purely graph theoretic characterization of the Gorenstein property among Artinian rings.

Our second objective is a question of Anderson and LaGrange \cite[Question 2.12]{ALB}, (cf. \cite[Question 4.10]{LaG16}), asking for the rings $R$ with
$\Gamma(R)\cong\GE(R)$. The same counting technique settles this question for Artinian rings. Indeed
$|D(\GE(R))|$ is always finite whereas $|D(\Gamma(R))|=\prod_i|\Soc R_i|-1+t,$ where $t$ is the number of copies of $\mathbb Z_2$ appearing in the decomposition of $R.$ An isomorphism between $\Gamma(R)$ and $\Gamma_E(R)$
therefore forces every socle to be finite, hence every residue field to be finite, hence $R$ itself to be finite. We then invoke \cite[Corollary 2.10]{ALB} to obtain our second main theorem (Theorem \ref{thm:mainB}).


\begin{theoremB}
Let $R$ be an Artinian ring which is not a field. Then $\Gamma(R)\cong\GE(R)$ if and only if
$R\cong\Z_2^{\,n}$ for some $n\ge2$, or $R\cong\Z_4$, or $R\cong\Z_2[x]/(x^2)$.
\end{theoremB}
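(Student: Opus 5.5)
The plan is to go through the counting invariant $|D(\cdot)|$ from the introduction and then quote \cite[Corollary 2.10]{ALB}, which classifies the finite rings with $\Gamma(R)\cong\GE(R)$.

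\textbf{Converse direction.} First check directly that each listed ring satisfies $\Gamma(R)\cong\GE(R)$. For $\Z_4$ and $\Z_2[x]/(x^2)$ the graph $\Gamma(R)$ is a single vertex, and that vertex is its own annihilator class. For $\Z_2^{\,n}$ the nonzero zero-divisors are the idempotents $e\neq 0,1$. Since $\ann(e)=(1-e)$, distinct idempotents have distinct annihilators, so $\eta$ is a bijection and hence an isomorphism. Alternatively, this direction is contained in \cite[Corollary 2.10]{ALB}.

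\textbf{Forward direction.} Suppose $\Gamma(R)\cong\GE(R)$. Decompose
\[
R\cong R_1\times\cdots\times R_k\times K_1\times\cdots\times K_m,
\]
where the $R_i$ are non-reduced Artinian local rings and the $K_j$ are fields. A graph isomorphism preserves distances, so it preserves boundary and core and therefore $|D|$.
\begin{itemize}
\item By the computation of Section~\ref{sec:count} (Proposition~\ref{prop:DE}), $|D(\GE(R))|=2^k-1+m+\delta$, which is finite.
\item By the formula $|D(\Gamma(R))|=\prod_i|\Soc R_i|-1+t$ obtained there via Theorem~\ref{thm:core}, finiteness forces every $\Soc R_i$ to be finite.
\end{itemize}

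\textbf{Deducing that $R$ is finite.} Each $\Soc R_i$ is a nonzero vector space over the residue field $k_i$, so every $k_i$ is finite. An Artinian local ring with finite residue field has finite length, and each quotient $\m^j/\m^{j+1}$ is a finite-dimensional $k_i$-space. Hence $R_i$ is finite.

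The fields $K_j$ need separate care, because $|D(\Gamma(R))|$ gives no control over their size. In the formula the fields contribute only through $t$ and the product structure. A large field factor $K_j$ could still be compatible with finite $|D|$, since $\Gamma(R)$ then has infinitely many vertices with the same annihilator. So I would argue directly instead. Take a factor $K_j$ with $|K_j|>2$ (or infinite), and assume $R$ is not a field. Then vectors that are nonzero units in coordinate $j$ and zero elsewhere form a class of size $|K_j|-1\ge2$ in $\Gamma(R)$; these are pairwise non-adjacent twins, i.e.\ they have the same neighbourhood. In $\GE(R)$, twin-ness gets compressed, so I would compare twin-class structure (or reuse $|D|$ via a refined count) to rule this out.

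\textbf{Main obstacle.} The weak spot is that twin classes are not obviously an isomorphism invariant that distinguishes the two graphs. The simplest fallback is cardinality. If some $K_j$ is infinite while all the $R_i$ are finite, then $\Gamma(R)$ is infinite but $\GE(R)$ is finite, since the annihilator classes correspond to finitely many annihilator ideals: $R$ has finitely many ideals when each $R_i$ is finite and the rest are fields. This contradicts the isomorphism, so every $K_j$ is finite and $R$ is finite. Then \cite[Corollary 2.10]{ALB} gives exactly $\Z_2^{\,n}$ with $n\ge2$, $\Z_4$, or $\Z_2[x]/(x^2)$. The remaining step I expect to need care is verifying that the infinite-field case is genuinely excluded by this cardinality count, including the case $k=0$ with an infinite field factor.
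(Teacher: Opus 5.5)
Your proposal is correct and follows essentially the paper's proof. The invariant $|D(\cdot)|$, via Propositions~\ref{prop:DE} and~\ref{prop:Dgamma}, forces each $\Soc R_i$, hence each $k_i$ and each $R_i$, to be finite; your cardinality fallback is then exactly the paper's Claim~2 (finitely many annihilator ideals make $\GE(R)$ finite, while an infinite $K_j$ would give infinitely many vertices $\varepsilon_j c$ of $\Gamma(R)$), followed by \cite[Corollary 2.10]{ALB}. The twin-class detour is unnecessary, and the case $k=0$ you flag needs nothing extra, since then $n=m\ge2$ and the same count applies.
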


\section{Preliminaries}

Throughout, $R$ is a commutative ring with unity. We recall the notions we need.

\smallskip
\noindent\textbf{2.1.} $Z(R)$ denotes the set of zero-divisors of $R$, with $0\in Z(R)$, and
$Z^{*}(R)=Z(R)\setminus\{0\}$. Denote by $N(R)$  the nilradical of $R.$ We write $R^{\times}$ for the group of units. The zero-divisor
graph $\Gamma(R)$ is the simple graph with vertex set $Z^{*}(R)$, two vertices $a,b$ being
adjacent if $ab=0$; it was introduced by Anderson and Livingston \cite{AL}.

\smallskip
\noindent\textbf{2.2.} For $a,b\in R$ we write $a\approx b$ if $\ann(a)=\ann(b)$. This is an
equivalence relation on $R$; its classes are the annihilator classes, and the class of
$a\in V(\Gamma(R))$ is denoted $[a]_E=\{x\in R\mid\ann(x)=\ann(a)\}$. The compressed
zero-divisor graph $\GE(R)$, introduced by Mulay \cite{Mulay}, has vertex set
$V(\Gamma(R))/\!\approx$, two distinct vertices $[a]_E,[b]_E$ being adjacent if $ab=0$.

\smallskip
\noindent\textbf{2.3.} We write $a\sim b$ if $a=ub$ for some $u\in R^{\times}$; the classes of
this equivalence relation are the associate classes, denoted $[a]_A=\{ua\mid u\in R^{\times}\}$.
The associate class graph $\GA(R)$, introduced by Tang, Lin and Wu \cite{TLW}, has vertex set
$V(\Gamma(R))/\!\sim$, two distinct vertices being adjacent if their product is $0$.

\smallskip
\noindent\textbf{2.4.} The associate classes are exactly the orbits of the natural action of
$G=R^{\times}$ on $X=R$ by multiplication. The relation $\sim$ is finer than $\approx$, so every annihilator class is a
disjoint union of associate classes.

\smallskip
\noindent\textbf{2.5.} Every annihilator ideal is a union of annihilator classes: if
$x\in\ann(a)$ and $\ann(y)=\ann(x)$, then $a\in\ann(x)=\ann(y)$ and so $y\in\ann(a)$. The same
holds for associate classes, since $\sim$ is finer than $\approx$.

\smallskip
\noindent\textbf{2.6.} A local ring $R$ with maximal ideal $\mathfrak m$ and residue field
$k=R/\mathfrak m$ is denoted $(R,\mathfrak m,k)$. For such a ring, $R^{\times}=R\setminus
\mathfrak m$; if $R$ is in addition Artinian, then $Z(R)=\mathfrak m$.

\smallskip
\noindent\textbf{2.7.} For a local ring $(R,\mathfrak m,k)$ the ideal
$\Soc R=(0:_R\mathfrak m)=\{x\in R\mid x\mathfrak m=0\}$ is called the socle of $R$; it is a
vector space over $k$. An Artinian local ring $(R,\mathfrak m,k)$ is Gorenstein if
$\dim_k\Soc R=1$. An Artinian ring is Gorenstein if each of its local factors is Gorenstein. In
particular, every field is Gorenstein, and a finite product is Gorenstein if and only if each
factor is.

\smallskip
\noindent\textbf{2.8.} For an Artinian local ring $(R,\mathfrak m,k)$ the following are
equivalent, see \cite[3.2.15]{BH}: (i) $R$ is Gorenstein; (ii) all finite $R$-modules are
reflexive; (iii) $I=\ann(\ann I)$ for every ideal $I$ of $R$; (iv) $I\cap J\neq0$ for all
non-zero ideals $I,J$.

\smallskip
\noindent\textbf{2.9.} For  a non-reduced Artinian local ring $(R,\mathfrak m,k)$ we denote by
$P(R)$  the number of associate classes contained in $\Soc R\setminus\{0\}.$

\smallskip
\noindent\textbf{2.10.} Every Artinian ring is, uniquely up to order, a finite direct product
of Artinian local rings. Annihilators in a product are computed coordinatewise:
$\ann(x)=\prod_t\ann_{T_t}(x_t)$ for $x=(x_t)\in\prod_tT_t$. The same formula holds for
associate classes, since $\bigl(\prod_tT_t\bigr)^{\times}=\prod_tT_t^{\times}$.

\smallskip
\noindent\textbf{2.11.} By \cite[Theorem 2.3]{AL}, $\Gamma(R)$ is connected with
$\dg\Gamma(R)\le3$. A path in $\Gamma(R)$ maps to a walk under $\eta$ and under $\psi$, so
$\GA(R)$ and $\GE(R)$ are connected with $\dg\GA(R)\le3$ and $\dg\GE(R)\le3$ as well.

\smallskip
\noindent\textbf{2.12.} A vertex of a graph is called universal if it is adjacent to every other vertex.

\section{Gorenstein rings and the two equivalence relations}\label{sec:gor}

In this section we compare the graphs $\GA(R)$ and $\GE(R)$,  and show that for Artinian rings $\GA(R) \cong \GE(R)$ (by a natural map) if and only if $R$ is Gorenstein ring. Later, in Section 5 (see Theorem \ref{thm:mainA}) we extend this result to show that  $\GA(R) \cong \GE(R)$ (by an arbitrary graph isomorphism) if and only if $R$ is Gorenstein ring.

\begin{lemma}\label{lem:socorbits}
Let $(R,\mathfrak m,k)$ be a non-reduced Artinian local ring and let
$P(R)$ denote the number of associate classes contained in $\Soc R\setminus\{0\}$. Then,
\begin{enumerate}
\item[(1)] $[s]_A=k^{\times}s$ for every $0\neq s\in\Soc R$, that is, the associate classes
inside $\Soc R\setminus\{0\}$ are exactly the punctured lines of the $k$-vector space
$\Soc R$;
\item[(2)] $P(R)=1$ if and only if $R$ is Gorenstein, and $P(R)\ge3$ otherwise.
\end{enumerate}
\end{lemma}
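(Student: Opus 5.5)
For part (1), fix $0\neq s\in\Soc R$ and compare the associate class $[s]_A=\{us\mid u\in R^{\times}\}$ with $k^{\times}s$. Since $s\mathfrak m=0$, the ideal $\Soc R$ is an $R/\mathfrak m=k$-module, and the $k$-action is $\bar r\cdot s=rs$. Write each unit as $u=u_0$ with image $\bar u\in k^{\times}$ (recall $R^{\times}=R\setminus\mathfrak m$ by 2.6). Then $us=\bar u\cdot s$ depends only on $\bar u$, so $[s]_A\subseteq k^{\times}s$. Conversely, every $\lambda\in k^{\times}$ lifts to some $r\in R$ with $\bar r=\lambda$. Since $\lambda\neq0$, we have $r\notin\mathfrak m$, so $r$ is a unit and $\lambda s=rs\in[s]_A$. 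Hence $[s]_A=k^{\times}s=(ks)\setminus\{0\}$ is exactly the punctured line through $s$. Different punctured lines are disjoint, and they cover $\Soc R\setminus\{0\}$. Therefore the associate classes in $\Soc R\setminus\{0\}$ are in bijection with the one-dimensional $k$-subspaces of $\Soc R$. So $P(R)$ equals the number of points of the projective space $\mathbb P(\Soc R)$.

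For part (2), recall that $R$ is non-reduced Artinian local, so $\mathfrak m\neq0$. Also $\Soc R\neq0$: take the largest $n$ with $\mathfrak m^{n}\neq0$, which exists by nilpotence of $\mathfrak m$; then $\mathfrak m^{n}\subseteq\Soc R$. Let $d=\dim_k\Soc R\ge1$. By part (1), $P(R)=1$ exactly when there is a single line, that is $d=1$, which is the Gorenstein condition in 2.7. If instead $d\ge2$, choose a two-dimensional subspace $W\subseteq\Soc R$. It contains at least $|k|+1\ge3$ distinct lines, namely $k(e_1+\lambda e_2)$ for $\lambda\in k$ together with $ke_2$; these are pairwise distinct, and $|k|\ge2$. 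Thus $P(R)\ge3$. When $k$ is infinite, $P(R)$ is infinite, which is consistent with the bound.

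The only point needing care is the lifting step in part (1): we need every nonzero residue to lift to a unit. This holds because the ring is local, so any element outside $\mathfrak m$ is a unit. The rest is routine linear algebra.
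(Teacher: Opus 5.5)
Your proposal is correct and follows essentially the same route as the paper. Part (1) uses the $k$-module structure of the socle and lifts nonzero residues to units to get $[s]_A=k^{\times}s$; part (2) counts lines in a two-dimensional subspace, where you get $|k|+1$ lines and the paper exhibits $ks$, $kt$, $k(s+t)$. Your explicit check that $\Soc R\neq 0$ also covers the $\dim_k\Soc R\ge 1$ point that the paper leaves implicit.
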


\begin{proof}
(1) Let $u\in R^{\times}$ and $s\in\Soc R$. If $u'\in R^{\times}$ has the same residue as $u$,
then $u-u'\in\mathfrak m$ and hence $(u-u')s=0.$ So, element $us$ depends only on $\bar u\in
k^{\times}$. Conversely every element of $k^{\times}$ lifts to a unit of $R$. Hence
$[s]_A=\{us\mid u\in R^{\times}\}=k^{\times}s$, which is the set of non-zero elements of the
line $ks$. Distinct lines meet only in $0$, so the classes inside $\Soc R\setminus\{0\}$
correspond bijectively to the lines of $\Soc R$.

(2) If $\dim_k\Soc R=1$ there is exactly one line, so we obtain $P(R)=1$. If $\dim_k\Soc R\ge2$, choose
$k$-linearly independent $s,t\in\Soc R.$ Then $ks$, $kt$ and $k(s+t)$ are three distinct
lines, so $P(R)\ge3$.
\end{proof}

The next example shows that $\GE(R)$ and $\GA(R)$ need not be isomorphic.

\begin{example}\label{ex:basic}
Let $R=\Z_2[x,y]/(x^2,xy,y^2)$. Here $\mathfrak m^2=0$ and $\Soc R=\mathfrak m$ has dimension
$2$, so $R$ is not Gorenstein. One checks at once that $\GE(R)$ is a graph with a single
vertex, while $\GA(R)$ is a complete graph on the three vertices $[\bar x]_A$, $[\bar y]_A$
and $[\overline{x+y}]_A$. Note that $P(R)=3$, in accordance with Lemma \ref{lem:socorbits}.
\end{example}

\begin{theorem}\label{thm:gor-nat}
For an Artinian ring $R$ the following are equivalent.
\begin{enumerate}
\item[(1)] $R$ is Gorenstein;
\item[(2)] $\approx$ and $\sim$ define the same equivalence relation on $R$;
\item[(3)] the natural map $\varphi:V(\GA(R))\to V(\GE(R))$, $\varphi([a]_A)=[a]_E$, is a
graph isomorphism.
\end{enumerate}
\end{theorem}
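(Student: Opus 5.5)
We prove $(1)\Rightarrow(2)\Rightarrow(3)\Rightarrow(1)$. By 2.10, both relations are computed coordinatewise in $R\cong\prod_t T_t$ with the $T_t$ Artinian local. So (2) holds for $R$ if and only if it holds for every $T_t$. Fields are trivial, since all nonzero elements are associates with annihilator $0$. Hence the real content is a local statement: for an Artinian local $(T,\m,k)$, the relations $\approx$ and $\sim$ agree if and only if $T$ is Gorenstein.

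\textbf{$(1)\Rightarrow(2)$.} Let $T$ be Gorenstein local and $\ann(a)=\ann(b)$. By 2.8(iii), $(a)=\ann(\ann(a))=\ann(\ann(b))=(b)$. In a local ring, elements generating the same principal ideal are associates. Indeed, write $a=rb$ and $b=sa$; then $(1-rs)a=0$. If $a\neq0$, then $rs\notin\m$, so $r$ is a unit. If $a=0$, then $b=0$. Hence $a\sim b$. Since $\sim$ always refines $\approx$, the two relations coincide.

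\textbf{$(2)\Rightarrow(3)$.} If the relations agree, every annihilator class among the vertices is a single associate class. So $\varphi$ is a bijection on vertices. Adjacency in both graphs is defined by $ab=0$ on representatives. Hence $\varphi$ preserves adjacency and non-adjacency, and is a graph isomorphism.

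\textbf{$(3)\Rightarrow(1)$.} This is the step needing care. We argue by contrapositive. Suppose some local factor $T_1$ is not Gorenstein; it is then non-reduced. By Lemma~\ref{lem:socorbits}, $\Soc T_1\setminus\{0\}$ contains two distinct associate classes $[s]_A\neq[s']_A$, for instance two punctured lines. Both $s$ and $s'$ have annihilator $\m_1$, so $s\approx s'$. Put $a=(s,0,\dots,0)$ and $b=(s',0,\dots,0)$ in $R$. By 2.10 we have $\ann(a)=\ann(b)$ but $a\not\sim b$. Both are nonzero zero-divisors, so they are distinct vertices of $\GA(R)$ with $\varphi([a]_A)=\varphi([b]_A)$. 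Thus $\varphi$ is not injective, and (3) fails. The one point to watch is that $a,b$ really are vertices, i.e.\ nonzero zero-divisors; this holds because $s\m_1=0$ and $\m_1\neq0$.

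This chain of implications also establishes $(2)\Rightarrow(1)$.
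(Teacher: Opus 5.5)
Your proof is correct. Two parts match the paper: (1)$\Rightarrow$(2) (via 2.8(iii) to get $(a)=(b)$, then associates in a local ring, with the reduction to local factors via 2.10), and (2)$\Rightarrow$(3), which is essentially verbatim.

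The converse is organized differently. The paper proves (1)$\Leftrightarrow$(2) and (2)$\Leftrightarrow$(3) as separate equivalences:
\begin{enumerate}
\item[(a)] For local (2)$\Rightarrow$(1), it picks $0\neq a\in\m^{n-1}$ and shows that every non-zero socle element is an associate of $a$, so $\Soc R=(a)$.
\item[(b)] For (3)$\Rightarrow$(2), it notes that injectivity of $\varphi$ only gives agreement of $\approx$ and $\sim$ on $Z^{*}(R)$. It then checks agreement on units and on $0$ separately, using that every element of an Artinian ring is a unit or a zero-divisor.
\end{enumerate}

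You instead close the cycle with a direct contrapositive (3)$\Rightarrow$(1). You take two non-associate socle elements from Lemma~\ref{lem:socorbits} and embed them as $(s,0,\dots,0)$ and $(s',0,\dots,0)$ to witness that $\varphi$ is not injective.

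What each route buys:
\begin{enumerate}
\item[(a)] Yours is slightly more economical, since it never needs the separate check on units and $0$.
\item[(b)] The paper's proves (3)$\Rightarrow$(2) directly, without routing through the Gorenstein property and 2.8(iii), as yours implicitly does.
\end{enumerate}
The underlying idea is the same in both: all non-zero socle elements share the annihilator $\m$, and they form a single associate class exactly when $\dim_k\Soc R=1$.
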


\begin{proof}
We first prove the equivalence of (1) and (2) in the local case, so let $(R,\mathfrak m,k)$
be Artinian local. Assume $R$ is Gorenstein and let $a\approx b$, i.e., $\ann(a)=\ann(b)$.
Then $\ann(\ann(a))=\ann(\ann(b))$, so $(a)=(b)$ by 2.8(iii). As $R$ is local this gives
$a=ub$ for some unit $u$, i.e., $a\sim b$. The converse implication $a\sim b\Rightarrow
a\approx b$ is clear, so $\approx$ and $\sim$ agree.

Conversely, assume $\approx$ and $\sim$ agree. Choose $n$ with $\mathfrak m^{n-1}\neq0$ and
$\mathfrak m^{n}=0$, and let $0\neq a\in\mathfrak m^{n-1}$. Then $a\mathfrak m=0$, so
$\ann(a)=\mathfrak m$ and $a\in\Soc R$. For any $x\in\Soc R\setminus\{0\}$ we have
$\ann(x)=\mathfrak m=\ann(a).$ Hence, $x\approx a$ and therefore $x\sim a$, i.e., $x=va$ for
some unit $v$. Thus $\Soc R=(a)$ and $\dim_k\Soc R=1$, so $R$ is Gorenstein. 

For a general Artinian ring write $R\cong T_1\times\cdots\times T_n$ with $T_t$ Artinian
local, as in 2.10. By 2.10, for $a,b\in R$ we have $\ann(a)=\ann(b)$ if and only if
$\ann(a_t)=\ann(b_t)$ for all $t$, and likewise $a\sim b$ if and only if $a_t\sim b_t$ for all
$t$. Hence $\approx$ and $\sim$ agree on $R$ if and only if they agree on every factor. Since
$R$ is Gorenstein if and only if each $T_t$ is Gorenstein, the equivalence of (1) and (2) follows from
the local case.

Finally, $\varphi$ is always surjective, and it is injective precisely when $\approx$ and
$\sim$ agree on $Z^{*}(R)$. In this case $[a]_A=[a]_E$ for all $a$, so $\varphi$ preserves  adjacency and is an isomorphism. Conversely, if $\varphi$ is an isomorphism then it
is injective, so $\approx$ and $\sim$ agree on $Z^{*}(R).$ As $\approx$ and $\sim$ trivially agree on units
and on $0$ and, since $R$ is Artinian every element is either a unit or zero-divisor, they agree on $R$. This gives (2) $\Leftrightarrow$ (3).
\end{proof}


\begin{remark}
In section 5, we extend the result above and show that (1), (2) and (3) are equivalent  even when the natural map $\varphi$ in (3) is replaced by an arbitrary graph isomorphism, that is,  $\GA(R) \cong \GE(R)$ (by an arbitrary graph isomorphism) if and only if $R$ is a Gorenstein ring. 	
\end{remark}
We shall also need the following two facts. The first says that, apart from a degenerate case,
the compressed zero-divisor graph of an Artinian local ring has only one universal vertex. Recall from 2.12 that a
vertex of a graph is universal if it is adjacent to every other vertex of the graph.

\begin{lemma}\label{lem:unique-univ}
Let $(R,\mathfrak m)$ be an Artinian local ring and let $[s]_E=\Soc R\setminus\{0\}$, which is
a universal vertex of $\GE(R)$. If $[x]_E\neq[s]_E$ is also a universal vertex of $\GE(R)$,
then $|V(\GE(R))|=2$.
\end{lemma}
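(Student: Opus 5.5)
Note first that $[s]_E=\Soc R\setminus\{0\}$ makes sense: every non-zero socle element has annihilator exactly $\mathfrak m$, and conversely $\ann(x)=\mathfrak m$ forces $x\in\Soc R$. This class is universal because it annihilates every element of $\mathfrak m=Z(R)$ (2.6). So the real content is the second claim, and I will prove it by the contrapositive: if $|V(\GE(R))|\ge 3$, then $[s]_E$ is the only universal vertex.

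Suppose $[x]_E\neq[s]_E$ is universal, so $x\in\mathfrak m\setminus\Soc R$. Universality says $xy=0$ for every $y\in Z^*(R)$ with $[y]_E\neq[x]_E$. The key step is to show that every $y\in Z^*(R)$ with $y\notin[x]_E$ lies in $\Soc R$. Take such a $y$. For any $z\in\mathfrak m$, universality of $[x]_E$ gives $xz=0$ unless $z\approx x$. I would like to conclude $yz=0$ for all $z\in\mathfrak m$, and I would compare annihilators to get there.

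The annihilator comparison goes as follows. By universality, $\ann(x)\supseteq \mathfrak m\setminus[x]_E$. Since $x\notin\Soc R$, $\ann(x)\neq\mathfrak m$, so some element of $[x]_E$ lies outside $\ann(x)$. By 2.5, $\ann(x)$ is a union of annihilator classes, so the whole class $[x]_E$ is disjoint from $\ann(x)$. Hence $\ann(x)=\mathfrak m\setminus[x]_E$ exactly. In particular $\mathfrak m\setminus[x]_E$ is an ideal, namely a proper subgroup $I$ of $\mathfrak m$ whose complement in $\mathfrak m$ is the single class $[x]_E$.

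Now take $y\in\mathfrak m\setminus[x]_E$ with $y\neq0$, so $y\in I$. For $z\in I$ we have $z+x\notin I$, so $z+x\in[x]_E$. Thus $\ann(z+x)=\ann(x)=I$, and therefore $(z+x)x=0$ requires $x\in I$, which is false. That is not a contradiction; instead, use $x\cdot x$. Since $x\notin\ann(x)$, we have $x^2\neq0$. Also $x^2\in\mathfrak m$. If $x^2\notin[x]_E$, then $x^2\in I=\ann(x)$, so $x^3=0$. This case analysis is where it gets fiddly.

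The cleaner route is to count vertices. The classes of $\GE(R)$ are $[x]_E$, $[s]_E$, and the classes inside $I\setminus\Soc R$. Suppose there is a third class $[y]_E$ with $y\in I\setminus\Soc R$. Then $\ann(y)\supseteq$ (all classes other than $[y]_E$ adjacent to it), which contains $[x]_E$ because $[x]_E$ is universal, and contains $\Soc R$. I would then show $x+y\in[x]_E$ (since $x+y\notin I$), which gives $\ann(x+y)=I$. Now pick $w\in\mathfrak m$ with $yw\neq0$, which exists because $y\notin\Soc R$. Then $w\in I$ would force $(x+y)w=0$, hence $xw=-yw\neq0$, contradicting $w\in I=\ann(x)$. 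So $w\in[x]_E$. Then $yw\neq0$ while $yx=0$ by universality. Now apply the same reasoning with $w$ in place of $x$, as another representative of the class: $\ann(w)=I$, but $y\in I$ gives $yw=0$, a contradiction. Hence no such $y$ exists, $V(\GE(R))=\{[x]_E,[s]_E\}$, and $|V(\GE(R))|=2$.

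The main obstacle is establishing $\ann(x)=\mathfrak m\setminus[x]_E$ via 2.5. Once that holds, the contradiction is the short argument just given.
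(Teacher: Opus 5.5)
Your proof is correct and is essentially the paper's argument. Your use of 2.5 to get $[x]_E\cap\ann(x)=\emptyset$, hence $\ann(x)=\mathfrak m\setminus[x]_E$, is the paper's step $x^2\neq0$; you obtain $x+y\in[x]_E$ from the ideal property, where the paper uses $x(x+y)=x^2\neq0$ together with universality; and your final split $w\in I$ versus $w\in[x]_E$ is exactly the paper's split $z\notin[x]_E$ versus $z\in[x]_E$ showing $\ann(y)=\mathfrak m$ (the exploratory $x^2,x^3$ paragraph can simply be deleted, and the last contradiction reads more cleanly as a direct proof that $yw=0$ for every $w\in\mathfrak m$).
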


\begin{proof}
Since $\Soc R=\ann(\mathfrak m)$ and $Z(R)=\mathfrak m$, every non-zero element of $\Soc R$
annihilates every zero-divisor, so $[s]_E$ is indeed universal. Let $[x]_E\neq[s]_E$ be
universal. Then $\mathfrak m\setminus[x]_E\subseteq\ann(x)$. If $x^2=0$ then
$[x]_E\subseteq\ann(x)$ by 2.5, so $\ann(x)=\mathfrak m$ and $[x]_E=[s]_E$, a contradiction;
hence $x^2\neq0$.

Let $[y]_E$ be any vertex different from $[x]_E$. Note that $x+y \neq 0,$ otherwise $[x]_E=[y]_E$. Then $xy=0$, and
$x(x+y)=x^2+xy=x^2\neq0$, so $[x]_E$ is not adjacent to $[x+y]_E$.  Now, universality of $[x]_E$
forces $[x+y]_E=[x]_E$, i.e., $x+y\in[x]_E$. Since $[x]_E$ is adjacent to $[y]_E$ and
$x+y\in[x]_E$, we get $y(x+y)=0$, i.e., $y^2=0$. We claim $\ann(y)=\mathfrak m$. Let
$z\in\mathfrak m$. If $z\in[x]_E$ then $\ann(z)=\ann(x)\ni y$, so $yz=0$. If $z\notin[x]_E$
then $[z]_E\neq[x]_E$, so $xz=0$ and $(x+y)z=0$ by universality of $[x]_E$ and the fact that
$x,x+y\in[x]_E$; subtracting gives $yz=0$. Hence $\mathfrak m\subseteq\ann(y)$, and as $y$ is
a non-zero zero-divisor, $\ann(y)=\mathfrak m$ and $[y]_E=[s]_E$. Therefore
$V(\GE(R))=\{[x]_E,[s]_E\}$.
\end{proof}



\section{The core of the zero-divisor graph}\label{sec:core}

By 2.11 all three graphs $\Gamma(R)$, $\Gamma_A(R)$ and $\Gamma_E(R)$ have diameter at most $3$. This suggests the following partition of
their vertex sets. 

\begin{definition}\label{def:core}
Let $G$ be a connected graph with $\dg G\le3$. The \emph{boundary} of $G$ is
$B(G)=\{x\in V(G) \mid \ d(x,y)=3\text{ for some }y\in V(G)\}$, and the \emph{core} of $G$ is
$C(G)=V(G)\setminus B(G)=\{x\in V(G) \mid \ d(x,y)\le2\text{ for all }y\in V(G)\}$. Thus
$V(G)=C(G)\,\dot\cup\,B(G)$, and any graph isomorphism $G\cong H$ carries $C(G)$ onto $C(H)$
and $B(G)$ onto $B(H)$, since isomorphisms preserve distances.
\end{definition}

The next theorem shows that the three graphs have the same core and boundary, in the sense
that these can be determined from the representatives.

\begin{theorem}\label{thm:dist3}
Let $R$ be a commutative ring and let $x,y\in Z^{*}(R)$. Then the following are equivalent:
\textup{(i)} $d_{\Gamma(R)}(x,y)=3$; \textup{(ii)} $d_{\GA(R)}([x]_A,[y]_A)=3$;
\textup{(iii)} $d_{\GE(R)}([x]_E,[y]_E)=3$.
\end{theorem}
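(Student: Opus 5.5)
The approach is to characterise distance $3$ in each graph by one ring-theoretic condition on the representatives $x,y$, and to check that this condition is the same in all three cases. In $\Gamma(R)$ (diameter at most $3$ by 2.11), $d(x,y)=3$ means three things: $x\neq y$, $xy\neq0$, and there is no $z\in Z^*(R)$ with $z\ne x,y$ and $xz=yz=0$. Since any nonzero common annihilator is automatically a zero-divisor, the last condition says $\ann(x)\cap\ann(y)\subseteq\{0,x,y\}$.

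The first step is to show that if $d_\Gamma(x,y)=3$ then $x^2\neq0$ and $y^2\neq0$. Suppose $x^2=0$. If $x=y$ the distance is $0$, contradiction, so $x\neq y$. Take any neighbour $w$ of $y$ (one exists since $\Gamma(R)$ is connected and $y$ is not isolated once $d(x,y)=3$). Consider $xw$. If $xw=0$, then $w$ is a common neighbour of $x$ and $y$, provided $w\ne x$; and $w\ne x$ because $xy\neq0$. That gives $d\le2$. If $xw\ne0$, then $x\cdot xw=0$ and $y\cdot xw=0$, so $xw\in\ann(x)\cap\ann(y)\setminus\{0\}$. It must therefore equal $x$ or $y$. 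If $xw=x$, then $xy=xwy=0$, a contradiction. If $xw=y$, then $xy=x^2w=0$, again a contradiction. The same argument applies to $y$. Hence, when $d=3$, neither $x$ nor $y$ lies in $\ann(x)\cup\ann(y)$, and the condition becomes simply
\[
xy\neq0\quad\text{and}\quad \ann(x)\cap\ann(y)=\{0\}.
\]

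Conversely, suppose this condition holds, together with $x^2\ne0$ and $y^2\ne0$. Then $x\neq y$ (else $xy=x^2$... fine as stated), and $d_\Gamma(x,y)=3$.

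Next, both $xy\neq0$ and $\ann(x)\cap\ann(y)=\{0\}$ depend only on the classes of $x,y$. For the first, $\ann(x)=\ann(x')$ gives $xy=0\iff y\in\ann(x)\iff x'y=0$. The second clearly depends only on the annihilators. The conditions $x^2\neq0$ and $y^2\neq0$, i.e.\ $x\notin\ann(x)$, are class invariants as well. So $d_\Gamma(x,y)=3$ if and only if $\Phi([x]_E,[y]_E)$ holds, where $\Phi(x,y)$ denotes the condition $x^2\ne0$, $y^2\ne0$, $xy\ne0$, $\ann(x)\cap\ann(y)=0$.

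The remaining step is to show that in $\GE(R)$ (and likewise in $\GA(R)$), distance $3$ between $[x]_E$ and $[y]_E$ is also equivalent to $\Phi$. This needs care with loops and self-classes, and I expect it to be the main obstacle.

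Suppose $\Phi$ holds. The classes are distinct, since they are non-adjacent while $xy\ne0$ (if they were equal, then $y\in[x]_E$ and $x^2\neq0$ give $xy\ne0$ anyway, but distinct classes have distance $\ge1$; equal classes would need $\ann(x)=\ann(y)$, and then $\ann(x)\cap\ann(y)=\ann(x)\ne0$). They are non-adjacent because $xy\neq0$. A common neighbour $[z]$ would give $z\ne0$ with $xz=yz=0$, contradicting $\Phi$. With diameter at most $3$, this forces distance $3$.

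Conversely, suppose the class distance is $3$. Then $xy\neq0$. A nonzero $z\in\ann(x)\cap\ann(y)$ would have a class $[z]$ adjacent to both, unless $[z]=[x]$ or $[z]=[y]$. In the case $[z]=[x]$ we get $zy=0$, hence $xy=0$, a contradiction; the case $[z]=[y]$ is symmetric. So $\ann(x)\cap\ann(y)=0$. Finally, $x^2\ne0$ follows from the first-step argument run in the compressed graph: the witness $xw$ gives a common neighbour class, and the equal-class cases lead to $xy=0$ as before.

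The same argument works verbatim for $\GA(R)$, since $\sim$ refines $\approx$ and everything used is annihilator-invariant.
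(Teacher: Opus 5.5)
Your proof is correct, but it is organized differently from the paper's. The paper never isolates a ring-theoretic criterion. It transfers paths directly: a path of length $\le 2$ between $[x]_A$ and $[y]_A$ (or between $[x]_E$ and $[y]_E$) yields one between $x$ and $y$ in $\Gamma(R)$, and conversely. The degenerate cases $[x]=[y]$ and $[z]\in\{[x],[y]\}$ are handled inline. You instead route everything through the intrinsic condition
\[
xy\neq0 \quad\text{and}\quad \ann(x)\cap\ann(y)=0 .
\]
You check that it is invariant under $\approx$, hence under the finer relation $\sim$, and then show separately that it is equivalent to distance $3$ in each of the three graphs. This takes slightly more work. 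In exchange you get a reusable statement valid for any commutative ring. It is exactly the first claim in the paper's Lemma~\ref{lem:dist}, which your route would make immediate.

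There are two places to tidy. First, your whole first step proving $x^2\neq0$ and $y^2\neq0$ is unnecessary. Having $x\in\ann(x)\cap\ann(y)$ already forces $xy=0$, and likewise for $y$. So $\ann(x)\cap\ann(y)\subseteq\{0,x,y\}$ together with $xy\neq0$ gives $\ann(x)\cap\ann(y)=0$ at once. You can drop the two squares from $\Phi$, together with the corresponding step in the compressed graph. Second, the parenthetical justifying $x\neq y$ in the converse for $\Gamma(R)$ is garbled. The correct reason, which you give later for the classes, is that $x=y$ would make $\ann(x)\cap\ann(y)=\ann(x)\neq0$, since $x$ is a zero-divisor.
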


\begin{proof}
(i)$\Rightarrow$(ii). Assume $d_{\Gamma(R)}(x,y)=3$ and suppose
$d_{\GA(R)}([x]_A,[y]_A)\le2$. If $[x]_A=[y]_A$ then $x$ and $y$ have the same neighbours,
forcing $d(x,y)\le2$. If $[x]_A$ and $[y]_A$ are adjacent then $xy=0$ and $d(x,y)=1$. If
$[x]_A-[z]_A-[y]_A$ is a path then $xz=zy=0$, so $x-z-y$ is a path in $\Gamma(R)$ and
$d(x,y)\le2$. Each case contradicts $d(x,y)=3$. As $\dg\GA(R)\le3$ by 2.11, we get
$d_{\GA(R)}([x]_A,[y]_A)=3$.

(ii)$\Rightarrow$(i). Assume $d_{\GA(R)}([x]_A,[y]_A)=3$. In particular, $[x]_A\neq[y]_A$, so
$x\neq y$. If $xy=0$ then $[x]_A$ and $[y]_A$ are adjacent, a contradiction. If $x-z-y$ is a
path in $\Gamma(R)$, then $xz=zy=0$ and hence $d_{\GA(R)}([x]_A,[y]_A)\le2$, again a
contradiction. So $d_{\Gamma(R)}(x,y)=3$.

The proofs of (i)$\Leftrightarrow$(iii) are identical, with $[\,\cdot\,]_A$ replaced by
$[\,\cdot\,]_E$.
\end{proof}

\begin{corollary}\label{cor:core-transfer}
Let $R$ be an Artinian ring and $x\in Z^{*}(R)$. Then $x\in B(\Gamma(R))$ if and only if
$[x]_A\in B(\GA(R))$, if and only if $[x]_E\in B(\GE(R))$. Consequently the same holds for core, that is, with
$B$ replaced by $C$, and
\[
   C(\GE(R))=\{[x]_E:x\in C(\Gamma(R))\},\qquad C(\GA(R))=\{[x]_A:x\in C(\Gamma(R))\} .
\]
\end{corollary}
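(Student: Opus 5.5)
The corollary should follow directly from Theorem \ref{thm:dist3} together with the definition of boundary and the surjectivity of the natural maps $\psi$ and $\eta$. The idea is to unwind the definition of $B(\cdot)$ in each graph and pass through representatives, using the fact that every vertex of $\GA(R)$ is of the form $[y]_A$ and every vertex of $\GE(R)$ is of the form $[y]_E$ with $y\in Z^{*}(R)$.

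We first check that the boundary is well defined. By 2.11 all three graphs are connected of diameter at most $3$, so Definition \ref{def:core} applies to each of them.

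Next we prove $x\in B(\Gamma(R))\Rightarrow[x]_A\in B(\GA(R))$. Suppose $d_{\Gamma(R)}(x,y)=3$ for some $y\in Z^{*}(R)$. Then (i)$\Rightarrow$(ii) of Theorem \ref{thm:dist3} gives $d_{\GA(R)}([x]_A,[y]_A)=3$, so $[x]_A$ is a boundary vertex.

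For the converse, suppose $d_{\GA(R)}([x]_A,w)=3$ for some vertex $w$. Write $w=[y]_A$ with $y\in Z^{*}(R)$; this is possible since $\psi$ is surjective. Then (ii)$\Rightarrow$(i) gives $d_{\Gamma(R)}(x,y)=3$. We should check that this conclusion does not depend on the chosen representative $x$ of $[x]_A$. It does not: the theorem is stated for arbitrary representatives $x,y$, so any representative works.

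The argument for $\GE(R)$ is identical, using (i)$\Leftrightarrow$(iii) and the surjectivity of $\eta$.

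Since $C=V\setminus B$ in each graph, the three equivalences for $B$ transfer to $C$ by taking complements. Finally, the displayed equalities follow. Every vertex of $\GE(R)$ is $[x]_E$ for some $x\in Z^{*}(R)$, and $[x]_E\in C(\GE(R))$ if and only if $x\in C(\Gamma(R))$. Hence $C(\GE(R))$ is exactly the image of $C(\Gamma(R))$ under $\eta$, and similarly $C(\GA(R))$ is the image of $C(\Gamma(R))$ under $\psi$.

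There is no real obstacle here. The only point needing care is that membership in $C(\Gamma(R))$ is constant on classes; this is built into the equivalences, since each is stated for every representative.
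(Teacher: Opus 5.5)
Your proposal is correct and is essentially the paper's approach: the paper gives no separate proof and treats the corollary as an immediate consequence of Theorem \ref{thm:dist3}. Your argument unwinds that step exactly, applying the theorem to representatives and using surjectivity of $\psi$ and $\eta$ for the converse directions.
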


We now determine the core of $\Gamma(R)$ for an arbitrary Artinian ring.

\begin{notation}\label{not:prod}
From now on $R$ is an Artinian ring which is not a field. By the structure theorem for Artinian rings we write $R$ as
\[
   R\;\cong\;R_1\times\cdots\times R_k\times K_1\times\cdots\times K_m ,
\]
where each $(R_i,\mathfrak m_i,k_i)$ is a non-reduced Artinian local ring, so
$\mathfrak m_i\neq0$ and $\Soc R_i\neq0$, and each $K_j$ is a field. Put $n=k+m$ and write
$R=T_1\times\cdots\times T_n$ with $T_i=R_i$ for $i\le k$ and $T_{k+j}=K_j$. For
$x=(x_1,\dots,x_n)\in R$ set
\[
   \mathcal N(x)=\{t \mid \ x_t\text{ is a non-unit of }T_t\},\qquad
   \mathcal U(x)=\{1,\dots,n\}\setminus\mathcal N(x).
\]
Thus $x\in Z(R)$ if and only if $\mathcal N(x)\neq\emptyset$, and
$N(R)=\mathfrak m_1\times\cdots\times\mathfrak m_k\times0\times\cdots\times0$ is the set of
$x$ with $\mathcal N(x)=\{1,\dots,n\}$. We write $e_t$ for the idempotent whose $t^{\text{th}}$
coordinate is $1$ and whose other coordinates are $0$, and $\varepsilon_j:=e_{k+j}$, so that
$\varepsilon_jR$ is a minimal ideal of $R.$ Further, $ae_t$ denotes the element with $t^{\text{th}}$
coordinate $a$ and all other coordinates $0$.
\end{notation}

\begin{lemma}\label{lem:dist}
Let $x,y\in Z^{*}(R)$ with $x\neq y$. Then $d_{\Gamma(R)}(x,y)=3$ if and only if
$\mathcal N(x)\cap\mathcal N(y)=\emptyset$ and $xy\neq0$.
\end{lemma}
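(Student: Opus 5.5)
We prove both directions by working coordinatewise in $R=T_1\times\cdots\times T_n$. The key observation is that a common neighbour of $x$ and $y$ is any non-zero $z$ with $zx=zy=0$, that is, any non-zero element of $\ann(x)\cap\ann(y)=\prod_t\bigl(\ann_{T_t}(x_t)\cap\ann_{T_t}(y_t)\bigr)$ other than $x$ and $y$. By 2.10 this ideal is a product, so it is non-zero exactly when some coordinate factor $\ann_{T_t}(x_t)\cap\ann_{T_t}(y_t)$ is non-zero.

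For the direction ($\Leftarrow$), assume $\mathcal N(x)\cap\mathcal N(y)=\emptyset$ and $xy\neq0$. Then $d(x,y)\ge2$, and we must rule out a path $x-z-y$. Suppose $zx=zy=0$. For each $t$, either $t\notin\mathcal N(x)$, so $x_t$ is a unit and $z_t=0$, or $t\notin\mathcal N(y)$, so $y_t$ is a unit and $z_t=0$. Hence $z=0$, a contradiction. Since $\dg\Gamma(R)\le3$ by 2.11, we get $d(x,y)=3$.

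For the direction ($\Rightarrow$), assume $d(x,y)=3$. Then $xy\neq0$ is immediate, and we argue by contrapositive for the other condition. Suppose some $t\in\mathcal N(x)\cap\mathcal N(y)$. We produce a common neighbour $z$ supported in coordinate $t$.
\begin{itemize}
\item If $T_t=R_i$ is non-reduced local, take $z=se_t$ with $0\neq s\in\Soc R_i$. Since $x_t,y_t\in\mathfrak m_i$, we have $sx_t=sy_t=0$, so $zx=zy=0$.
\item If $T_t$ is a field, then $x_t=y_t=0$, so $z=e_t$ annihilates both $x$ and $y$.
\end{itemize}

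The main obstacle is ensuring that $z$ is distinct from $x$ and $y$, so that $x-z-y$ is genuinely a path. If $z=x$, then $x$ is supported in coordinate $t$ and $xy=zy=0$, contradicting $xy\neq0$; the case $z=y$ is symmetric. So $z\notin\{x,y\}$ and $d(x,y)\le2$, a contradiction. This completes the equivalence.
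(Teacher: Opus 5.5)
Your proof is correct and follows essentially the same route as the paper. Both arguments use the coordinatewise description of $\ann(x)\cap\ann(y)$ to show that a common neighbour exists exactly when $\mathcal N(x)\cap\mathcal N(y)\neq\emptyset$, with the same witnesses (a socle element or $e_t$) and the same observation that a witness equal to $x$ or $y$ would force $xy=0$, and then conclude from $\dg\Gamma(R)\le3$.
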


\begin{proof}
We first claim that $d(x,y)\le2$ if and only if $xy=0$ or $\ann(x)\cap\ann(y)\neq0$. Indeed,
if $xy=0$ then $d(x,y)=1$; and if $0\neq z\in\ann(x)\cap\ann(y)$ then either $z\notin\{x,y\}$,
in which case $x-z-y$ is a path, or $z\in\{x,y\}$, in which case $xy=0$. Conversely a common
neighbour of $x$ and $y$ is a non-zero element of $\ann(x)\cap\ann(y)$. This shows that $d(x,y)\le2$ if and only if $xy=0$ or $\ann(x)\cap\ann(y)\neq0$.

Next we claim that $\ann(x)\cap\ann(y)\neq0$ if and only if
$\mathcal N(x)\cap\mathcal N(y)\neq\emptyset$. By 2.10,
$\ann(x)\cap\ann(y)=\prod_t\bigl(\ann_{T_t}(x_t)\cap\ann_{T_t}(y_t)\bigr)$, which is non-zero
exactly when some factor is non-zero.   Let $0 \neq  z \in \ann(x)\cap\ann(y).$ So, there exists $t$ such that $z_t \neq 0$.  Since $x_tz_t=0=y_tz_t$, it follows that $x_t$ and $y_t$ cannot be units. So, $t \in  \mathcal N(x)\cap\mathcal N(y).$ 
Conversely, let $t\in\mathcal N(x)\cap\mathcal N(y).$ If $T_t=K_j$ then $x_t=y_t=0$ and the $t$-th factor
is $K_j\neq0$, while if $T_t=R_i$ then $x_t,y_t\in\mathfrak m_i$ and the $t$-th factor
contains $\Soc R_i\neq0$.

Combining the two claims with $\dg\Gamma(R)\le3$ gives the lemma.
\end{proof}

\begin{theorem}\label{thm:core}
Let $R$ be as in Notation \ref{not:prod}. Then
\[
   C(\Gamma(R))=\bigl(N(R)\cup\varepsilon_1R\cup\cdots\cup\varepsilon_mR\bigr)\setminus\{0\}.
\]
Equivalently, a vertex $x$ lies in the core if and only if either every coordinate of $x$ is a
non-unit, i.e., $x$ is nilpotent, or $x=\varepsilon_jc$ for some $j$ and some
$c\in K_j^{\times}$. Consequently
$B(\Gamma(R))=Z^{*}(R)\setminus\bigl(N(R)\cup\varepsilon_1R\cup\cdots\cup\varepsilon_mR\bigr)$.
\end{theorem}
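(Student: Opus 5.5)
We will characterize core vertices using Lemma \ref{lem:dist}: a vertex $x\in Z^{*}(R)$ lies in $B(\Gamma(R))$ exactly when there is some $y\in Z^{*}(R)$, $y\neq x$, with $\mathcal N(x)\cap\mathcal N(y)=\emptyset$ and $xy\neq0$. So the whole proof reduces to deciding, for each $x$, whether such a $y$ exists. We split according to $\mathcal N(x)$.

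\emph{Core inclusion.} If $x\in N(R)\setminus\{0\}$, then $\mathcal N(x)=\{1,\dots,n\}$. Hence $\mathcal N(x)\cap\mathcal N(y)\neq\emptyset$ for every zero-divisor $y$, since $\mathcal N(y)\neq\emptyset$. Thus no vertex is at distance $3$ from $x$, and $x$ is in the core. Next let $x=\varepsilon_jc$ with $c\in K_j^{\times}$. For a candidate $y$ with $\mathcal N(x)\cap\mathcal N(y)=\emptyset$, note that $\mathcal N(x)$ is everything except $k+j$. So $\mathcal N(y)\subseteq\{k+j\}$, and since $y$ is a zero-divisor, $\mathcal N(y)=\{k+j\}$. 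In particular $y_{k+j}=0$ (a non-unit of a field), so $xy=0$. Hence no $y$ works, and $x$ is in the core.

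\emph{Boundary inclusion.} Now let $x\in Z^{*}(R)$ be neither nilpotent nor of the form $\varepsilon_jc$. We must construct $y$. Since $x$ is not nilpotent, $\mathcal U(x)\neq\emptyset$. Since $x$ is a zero-divisor, $\mathcal N(x)\neq\emptyset$. Take $y=\sum_{t\in\mathcal N(x)}e_t$, which is an idempotent. Then $\mathcal N(y)=\mathcal U(x)\neq\emptyset$, so $y\in Z^{*}(R)$, and $\mathcal N(x)\cap\mathcal N(y)=\emptyset$. It remains to get $xy\neq0$, i.e.\ some $t\in\mathcal N(x)$ with $x_t\neq0$, and also $y\neq x$.

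This is the main obstacle: $xy$ could vanish if $x_t=0$ for all $t\in\mathcal N(x)$. In that case we instead use $y'=\sum_{t\in\mathcal U(x)}e_t$ and adjust, choosing $y$ with $\mathcal N(y)$ disjoint from $\mathcal N(x)$. That forces $\mathcal U(y)\supseteq\mathcal N(x)$ and $\mathcal N(y)\subseteq\mathcal U(x)$, so $xy\neq0$ needs some $t\in\mathcal U(x)\cap\mathcal U(y)$. We therefore want $\mathcal U(x)$ to have at least two indices, or one index that can be a non-unit without being $0$. Concretely, we set $y_t=1$ for $t\in\mathcal N(x)$, and on $\mathcal U(x)$ we make $y$ a unit in one coordinate $s$ and a non-unit in another. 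If $|\mathcal U(x)|\ge2$ this works with $y_{t}=0$ for $t\in\mathcal U(x)\setminus\{s\}$, giving $xy$ with $s$-coordinate $x_s\neq0$.

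If $\mathcal U(x)=\{s\}$ is a singleton, there are two subcases. If $s\le k$, so $T_s=R_s$ is non-reduced, take $y_s\in\mathfrak m_s\setminus\{0\}$; then $x_sy_s\neq0$ because $x_s$ is a unit. If $s=k+j$ is a field index, then for $t\in\mathcal N(x)$ we have $x_t$ a non-unit. Since $x\neq\varepsilon_jx_s$, some $t\in\mathcal N(x)$ has $x_t\neq0$, and the original idempotent $y=\sum_{t\in\mathcal N(x)}e_t$ already gives $xy\neq0$. Finally, $y\neq x$ is automatic from $\mathcal N(y)\cap\mathcal N(x)=\emptyset$ together with $\mathcal N(x)\neq\emptyset$. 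This yields the boundary claim and hence the stated formula for $C(\Gamma(R))$ and $B(\Gamma(R))$.
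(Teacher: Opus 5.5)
Your proof is correct and follows essentially the paper's route: Lemma \ref{lem:dist} together with the same case split (nilpotent $x$; $x=\varepsilon_jc$; some non-zero coordinate of $x$ on $\mathcal N(x)$; $|\mathcal U(x)|\ge2$; $\mathcal U(x)$ a single non-reduced index). The differences are cosmetic: your choice of $y$ (e.g.\ the idempotent $\sum_{t\in\mathcal N(x)}e_t$) and your uniform argument that $y\neq x$. Two small cleanups: delete the passing mention of $y'=\sum_{t\in\mathcal U(x)}e_t$, since $\mathcal N(y')=\mathcal N(x)$; and note that your final field-index subcase cannot occur when $x$ vanishes on $\mathcal N(x)$, because $x$ would then be of the excluded form $\varepsilon_jc$.
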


\begin{proof}
Let $x \in Z^{*}(R).$ We make the following cases and show in each case whether or not $x$ belongs to	$C(\Gamma(R)).$

\noindent \underline{Case (i)} $\mathcal U(x)=\emptyset$, i.e.,  $x\in N(R)\setminus\{0\}$ : In this case, $x = (x_1, \ldots, x_k, 0, \ldots, 0)$ where $ x_i \in \m_i.$ 	Note that $\mathcal N(x) = \{1, 2, \ldots, n\}$ and  hence for any $ y \in Z^*(R)$ with $ x \neq y,$ we have $\mathcal N(x) \cap \mathcal N(y) \neq \emptyset.$  So, by Lemma \ref{lem:dist},  $d(x,y)\le 2$ for all $y \in V(\Gamma(R))$ and hence 	$x\in C(\Gamma(R))$.

\smallskip
\noindent \underline{Case (ii)} $\mathcal U(x)\neq\emptyset$ and $x_t\neq 0$ for some 	$t\in\mathcal N(x)$: Since $x$ is a vertex of $\Gamma(R)$ we have $\mathcal N(x)\neq\emptyset$, so
$n\ge 2$. Fix $t_0\in\mathcal U(x)$ and let $y$ be defined by $y_{t_0}=0$ and $y_s=1$ for
$s\neq t_0$. Then $y\neq 0$, $\mathcal N(y)=\{t_0\}\subseteq\mathcal U(x)$, and $y\neq x$
because $y_t=1$ while $x_t$ is a non-unit.  As $y_t=1$ and $x_t \neq 0$ is a non-unit, we have $xy\neq 0. $ Since $\mathcal N(y)=\{t_0\}\subseteq \mathcal U(x),$ we have $N(y) \cap N(x) = \emptyset.$  So, by Lemma \ref{lem:dist}, $d(x,y)=3.$ This shows that $x\in B(\Gamma(R))$.

\smallskip
\noindent \underline{Case (iii)} $\mathcal U(x)\neq\emptyset$ and $x_t=0$ for all	$t\in\mathcal N(x)$ : \\	
\noindent \underline{Subcase (i)}  $|\mathcal U(x)|\ge 2$: Choose $s, t \in \mathcal U(x)$ with $s \neq t$ and let $y_s=0$,
$y_r=1$ for $r\neq s$. Then, $\mathcal N(y)=\{s\}$, $y\neq x$ (as $y_s=0$ while
$x_s$ is a unit), and
$x_ty_t\neq 0.$ As,  $\mathcal N(y) \subset \mathcal U(x)$ we have, $\mathcal N(x) \cap \mathcal N(y) = \emptyset.$  Hence, by Lemma \ref{lem:dist} $d(x,y)=3$ giving us $x\in B(\Gamma(R))$.

\noindent \underline{Subcase (ii)}  $\mathcal U(x)=\{t_0\}$ with $T_{t_0}=R_i$ non-reduced:  Note first that $\mathcal N (x)= \{1, 2, \ldots, n\} \setminus \{ t_0\}.$ Choose
$y_{t_0}\in\mathfrak m_i\setminus\{0\}$ and $y_r=1$ for $r\neq t_0$. Then,
$\mathcal N(y)=\{t_0\}$ and $x_{t_0}y_{t_0}\neq 0$ because $x_{t_0}$ is a unit. So, we have $xy \neq 0.$ Also, $x \neq y$ as $y_{t_0}$ is non-unit while $x_{t_0}$ is unit. It is easy to see that $\mathcal N(x) \cap \mathcal N(y) =\emptyset.$ So, by Lemma \ref{lem:dist} we obtain	$x\in B(\Gamma(R))$ .

\noindent \underline{Subcase (iii)} If $\mathcal U(x)=\{t_0\}$ with $T_{t_0}=K_j$ a field: Since already $x_t=0$ for all	$t\in\mathcal N(x)$ we find that in this subcase  $x=\varepsilon_jc$ with
$c\in K_j^{\times}$. We claim that $x\in C(\Gamma(R))$. Suppose on the contrary that $x\in B(\Gamma(R))$. So,  there exists $y$ in $\Gamma(R)$ such that $d(x, y) =3.$ By Lemma \ref{lem:dist} we obatin $\mathcal N(x)\cap\mathcal N(y)=\emptyset$ and $ xy\neq 0.$  Thus, $\mathcal N(y) \subseteq \mathcal U(x)= \{t_0\}$ giving us that either $\mathcal N(y) = \emptyset $ or $\mathcal N(y) = \{t_0\}.$ Since $y$ is a zero-divisor, $\mathcal N(y) = \emptyset $ is not possible. Hence, $\mathcal N(y) = \{t_0\}.$ Since, $T_{t_0}=K_j$ is a field it follows that $y_{t_0} =0.$
Since $x_t =0$ for all $ t \neq t_0$ this gives $xy = 0,$ contradicting $x y \neq 0.$   This proves that $x\in C(\Gamma(R))$.\\
\indent	Now combining the cases (i), (ii) and (iii) we get the desired description of $C(\Gamma(R))$ and $B(\Gamma(R)).$
\end{proof}

\begin{remark}
For $m=0$ the theorem says $C(\Gamma(R))=N(R)\setminus\{0\}$. Note that for $n=1$ we recover the familiar fact that $\dg\Gamma(R)\le2$ for an Artinian local
ring, as in this case $C(\Gamma(R))=Z^{*}(R)$. For $k=0$, i.e., $R$ is reduced, the core consists of
the vertices with exactly one non-zero coordinate.
\end{remark}

\section{Counting the core-dominating vertices}\label{sec:count}

\begin{definition}\label{def:D}
Let $G$ be a connected graph with $\dg G\le3$. A vertex $v\in V(G)$ is \emph{core-dominating}
if $v$ is adjacent to every vertex of $C(G)\setminus\{v\}$. We write $D(G)$ for the set of
core-dominating vertices. Being defined purely in terms of the core and of adjacency,
$D(\cdot)$ is a graph invariant: if $G\cong H$ then $|D(G)|=|D(H)|$.
\end{definition}

\begin{lemma}\label{lem:DinC}
$D(\Gamma(R))\subseteq C(\Gamma(R))$, and likewise for $\GA(R)$ and $\GE(R)$.
\end{lemma}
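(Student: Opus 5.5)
The plan is to argue by contradiction, using the explicit description of the boundary from Lemma \ref{lem:dist} together with the core description in Theorem \ref{thm:core}. Suppose $v\in D(\Gamma(R))$ but $v\in B(\Gamma(R))$. Then there is $y\in Z^{*}(R)$ with $d(v,y)=3$. By Lemma \ref{lem:dist}, this means $\mathcal N(v)\cap\mathcal N(y)=\emptyset$ and $vy\neq0$. I will produce a core vertex $w$ with two properties: $w\ne v$, and $w$ is adjacent to $y$ but not to $v$. This contradicts $v$ being core-dominating.

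\emph{Construction of $w$.} Since $y$ is a zero-divisor, $\mathcal N(y)\neq\emptyset$; fix $t\in\mathcal N(y)$. Then $t\in\mathcal U(v)$, so $v_t$ is a unit. If $T_t=R_i$ is non-reduced, choose $0\neq s\in\Soc R_i$; if $T_t=K_j$ is a field, put $s=1$. Let $w=se_t$.

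\emph{Properties of $w$.}
\begin{itemize}
\item $w$ lies in the core. In the first case $w\in N(R)\setminus\{0\}$, and in the second $w=\varepsilon_j$. Either way $w\in C(\Gamma(R))$ by Theorem \ref{thm:core}.
\item $wy=0$. Here $wy=sy_te_t$, and $y_t$ is a non-unit. In the local case $y_t\in\mathfrak m_i$ kills the socle element $s$; in the field case $y_t=0$.
\item $wv\neq0$. Here $wv=sv_te_t$ with $v_t$ a unit and $s\neq0$.
\item $w\neq v$. If $w=v$, then $vy=wy=0$, contradicting $vy\neq0$.
\end{itemize}
Hence $w\in C(\Gamma(R))\setminus\{v\}$ is not adjacent to $v$, which contradicts $v\in D(\Gamma(R))$. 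So $D(\Gamma(R))\subseteq C(\Gamma(R))$.

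\emph{Compressed graphs.} For $\GA(R)$ and $\GE(R)$ the same argument runs on representatives. Suppose $[v]\in D$ but $[v]$ is a boundary vertex. By Theorem \ref{thm:dist3} and Corollary \ref{cor:core-transfer}, $v\in B(\Gamma(R))$, so there is $y$ with $d_{\Gamma(R)}(v,y)=3$. Build $w$ from $v$ and $y$ exactly as above. Then $[w]$ is a core vertex by Corollary \ref{cor:core-transfer}, and $[w]$ is not adjacent to $[v]$ because $wv\neq0$. It remains to exclude $[w]=[v]$. In that case $\ann(w)=\ann(v)$, since both $\sim$ and $\approx$ preserve annihilators. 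Then $y\in\ann(w)=\ann(v)$ gives $vy=0$, a contradiction.

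The only delicate step is the choice of $w$: it must be a core vertex, annihilate $y$, and fail to annihilate $v$ simultaneously. This is exactly what the coordinate $t\in\mathcal N(y)\cap\mathcal U(v)$ and a socle element (or $1$ in a field factor) provide.
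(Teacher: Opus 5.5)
Your proof is correct and uses the same construction as the paper. For a coordinate $t\in\mathcal N$ of a boundary vertex, the element $se_t$ (with $0\neq s\in\Soc R_i$, or $\varepsilon_j$ in a field factor) is a core neighbour of that vertex. The paper phrases this directly: a core-dominating vertex is then within distance $2$ of every boundary vertex. You instead argue by contradiction and check $wv\neq0$ explicitly through Lemma \ref{lem:dist}; that check is harmless but not needed, since the path $v-w-y$ would already contradict $d(v,y)=3$.
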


\begin{proof}
Let $z\in B(\Gamma(R))$ and pick $t\in\mathcal N(z)$. If $t\le k$ put $y=s_te_t$ with
$0\neq s_t\in\Soc R_t$, and if $t>k$ put $y=\varepsilon_{t-k}$. In both cases $yz=0$ and
$y\in C(\Gamma(R))$ by Theorem \ref{thm:core}, so every boundary vertex has a neighbour in the
core. Hence if $x$ is adjacent to all of $C(\Gamma(R))\setminus\{x\}$, then $d(x,z)\le2$ for
$z\in B(\Gamma(R))$ and $d(x,z)\le1$ for $z\in C(\Gamma(R))$, so $x\in C(\Gamma(R))$. The
arguments for $\GA(R)$ and $\GE(R)$ are identical, using Corollary \ref{cor:core-transfer}.
\end{proof}

\begin{proposition}\label{prop:Dgamma}
Let $R$ be as in Notation \ref{not:prod} and put $t:=\#\{j:|K_j|=2\}$. Then
\[
   D(\Gamma(R))=\bigl(\Soc R_1\times\cdots\times\Soc R_k\times0\times\cdots\times0\bigr)
   \setminus\{0\}\ \cup\ \{\varepsilon_j:\ K_j\cong\mathbb F_2\},
\]
and consequently $\displaystyle|D(\Gamma(R))|=\prod_{i=1}^{k}\bigl|\Soc R_i\bigr|-1+t$.
\end{proposition}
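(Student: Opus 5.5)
By Lemma \ref{lem:DinC} every core-dominating vertex lies in $C(\Gamma(R))$. Theorem \ref{thm:core} describes the core as the nonzero elements of $N(R)$ together with the elements $\varepsilon_jc$, $c\in K_j^{\times}$. So it suffices to decide, for each $x$ of these two types, whether $x$ annihilates every other core vertex. Call $S$ the set on the right-hand side of the statement. I will prove both inclusions $S\subseteq D(\Gamma(R))$ and $D(\Gamma(R))\subseteq S$, and then count.

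\textbf{The inclusion $S\subseteq D(\Gamma(R))$.} First let $x=(s_1,\dots,s_k,0,\dots,0)\neq0$ with $s_i\in\Soc R_i$. Every nilpotent $y$ has $y_i\in\mathfrak m_i$, so $x_iy_i=0$ for all $i$, and hence $xy=0$. Every $y=\varepsilon_jc$ is supported in a field coordinate where $x$ vanishes, so again $xy=0$. Next let $x=\varepsilon_j$ with $K_j\cong\mathbb F_2$. It kills every nilpotent vertex, since those vanish in all field coordinates. It kills every $\varepsilon_{j'}c$ with $j'\neq j$. The only core vertex of the form $\varepsilon_jc$ is $\varepsilon_j$ itself, because $K_j^{\times}=\{1\}$, and that vertex is excluded from the condition.

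\textbf{The inclusion $D(\Gamma(R))\subseteq S$.} Take $x\in D(\Gamma(R))$.

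Suppose first that $x=\varepsilon_jc$. If $|K_j|>2$, choose $c'\in K_j^{\times}$ with $c'\neq c$. Then $\varepsilon_jc'$ is a different core vertex and $(\varepsilon_jc)(\varepsilon_jc')=\varepsilon_jcc'\neq0$, so $x\notin D(\Gamma(R))$. Hence $|K_j|=2$ and $x=\varepsilon_j$.

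Suppose instead that $x$ is nilpotent. We want to show $x_i\in\Soc R_i$ for every $i$. Fix $i$ and $a\in\mathfrak m_i\setminus\{0\}$, and consider the core vertex $ae_i$. If $ae_i\neq x$, then $x\cdot ae_i=0$, which gives $x_ia=0$. If $ae_i=x$, then $x_i=a$ and we still need $a\cdot a=0$. This is the one delicate point, handled in the next paragraph.

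\textbf{The case $ae_i=x$ (main obstacle).} Here $x=ae_i$, and we must show $a^2=0$. Use $\mathfrak m_i$ in place of the single element $a$.
\begin{itemize}
\item If there is $b\in\mathfrak m_i\setminus\{0,a\}$, then $a\cdot b=0$. When $a+b\neq0$ and $a+b\neq a$, the same argument gives $a(a+b)=0$. Subtracting, $a^2=0$.
\item The case $a+b=0$ means $b=-a$, so $a\cdot(-a)=0$, and again $a^2=0$.
\item If no such $b$ exists, then $\mathfrak m_i=\{0,a\}$. Since $\mathfrak m_i$ is nilpotent, $a^2\in\mathfrak m_i$ with $a^2\neq a$, so $a^2=0$.
\end{itemize}
In all cases $a^2=0$, so $x_i\mathfrak m_i=0$ for every $i$. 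Together with the vanishing of $x$ in the field coordinates, this gives $x\in(\Soc R_1\times\cdots\times\Soc R_k\times0\times\cdots\times0)\setminus\{0\}$, i.e.\ $x\in S$.

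\textbf{Counting.} The two pieces of $S$ are disjoint, since the first vanishes in all field coordinates and the second does not. The first piece has $\prod_i|\Soc R_i|-1$ elements and the second has $t$ elements. This gives
\[
|D(\Gamma(R))|=\prod_{i=1}^{k}|\Soc R_i|-1+t,
\]
interpreted as a cardinal equation when some socle is infinite.
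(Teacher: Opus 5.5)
Your proof is correct and follows the paper's argument essentially step for step. You restrict to the core via Lemma \ref{lem:DinC} and Theorem \ref{thm:core}, rule out $\varepsilon_jc$ with $|K_j|>2$ using $\varepsilon_jc'$, test a nilpotent $x$ against the core vertices $ae_i$, and resolve the self-adjacency case $x=ae_i$ with a translate $(a+b)e_i$. The only difference is that the paper translates by a nonzero socle element $s_i$, so $x_is_i=0$ holds automatically and such an $s_i$ always exists, which spares you the separate case $\mathfrak m_i=\{0,a\}$.
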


\begin{proof}
	$(\supseteq)$ Let $x\in\prod_i\operatorname{Soc}R_i\times 0$ be non-zero. For
$y\in N(R)$ we have $xy=0$ because $\operatorname{Soc}R_i$ annihilates $\mathfrak m_i$, and
$x\varepsilon_jc=0$ by disjointness of supports. So, by Theorem \ref{thm:core} it follows that $x$ is core-dominating vertex, i.e., $x \in D(\Gamma(R)).$ If $x =\varepsilon_j $ for some $j$ such that  $K_j\cong\mathbb F_2$ then $\varepsilon_jR\setminus\{0\}=\{\varepsilon_j\}$, and
$\varepsilon_j$ annihilates $N(R)$ and every $\varepsilon_{j'}c'$ with $j'\neq j$. Hence, $x \in D(\Gamma(R)).$ 

To prove the other containment	$\subseteq , $ let $x\in D(\Gamma(R)).$ By Lemma \ref{lem:DinC} and Theorem
\ref{thm:core} there are two possibilities: either $x=\varepsilon_jc$ with $c\in K_j^{\times}$ or $x\in N(R)\setminus\{0\}$.

\noindent Case(i): \emph{$x=\varepsilon_jc$ with $c\in K_j^{\times}$.} If $|K_j|>2$, choose $c'\in K_j^{\times}$
with $c'\neq c$. Then $\varepsilon_jc'$ is a core vertex distinct from $x$ and
$x\cdot\varepsilon_jc'=\varepsilon_jcc'\neq 0$, a contradiction. Hence,   $|K_j|=2$ and
$x=\varepsilon_j$.

\noindent Case(ii): $x\in N(R)\setminus\{0\}$. Suppose $x_i\notin\operatorname{Soc}R_i$ for some $i \in \{1, 2, \ldots , k\}$, and
choose $y_i\in\mathfrak m_i$ with $x_iy_i\neq 0$. Put $y=y_ie_i\in N(R)\setminus\{0\}$.
If $y\neq x$ we contradict core-domination. If $y=x$, then $x=x_ie_i$ with $x_i^2\neq 0$;
replace $y$ by $y'=(x_i+s_i)e_i$ where $0\neq s_i\in\operatorname{Soc}R_i$. Then $y'\neq x$;
also $y'\neq 0$, for $x_i=-s_i$ would give $x_i\in\operatorname{Soc}R_i$ and hence
$x_i^2=0.$ Since $x_i+s_i \in \m_i$  it follows from Theorem \ref{thm:core}  that $y'=(x_i+s_i)e_i \in C(\Gamma(R)).$ Since, $xy'=x_i^2+x_is_i=x_i^2\neq 0$, we find that $x$ is not adjacent to $y' \in C(\Gamma(R))$ again a contradiction. Therefore
$x_i\in\operatorname{Soc}R_i$ for every $i$.
\end{proof}

\begin{lemma}\label{lem:dS}
Let $(S,\mathfrak m)$ be a non-reduced Artinian local ring and let $d(S)$ be the number of
universal vertices of $\GE(S)$. Then $d(S)=2$ if $|V(\GE(S))|=2$, and $d(S)=1$ otherwise.
\end{lemma}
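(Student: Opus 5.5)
The proof follows from Lemma \ref{lem:unique-univ} together with a check of the two-vertex case. First I note that $\GE(S)$ always has at least one universal vertex, namely $[s]_E=\Soc S\setminus\{0\}$ with $0\neq s\in\Soc S$. Since $S$ is non-reduced we have $\mathfrak m\neq 0$, so $\Soc S\neq0$ and this vertex exists. Lemma \ref{lem:unique-univ} verifies that it is universal: $\Soc S=\ann(\mathfrak m)$ and $Z(S)=\mathfrak m$. Also $\Soc S\setminus\{0\}$ is exactly one annihilator class, because $\ann(x)=\mathfrak m$ holds precisely for $0\neq x\in\Soc S$. Hence $d(S)\ge1$.

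Next, suppose $|V(\GE(S))|\neq2$. If there were a second universal vertex $[x]_E\neq[s]_E$, Lemma \ref{lem:unique-univ} would force $|V(\GE(S))|=2$, which is a contradiction. So $[s]_E$ is the only universal vertex and $d(S)=1$. This case includes the one-vertex graph $|V(\GE(S))|=1$, where the single vertex is vacuously universal; I would state this explicitly so the count is clearly $1$ there.

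Finally, suppose $|V(\GE(S))|=2$. By 2.11 $\GE(S)$ is connected, so its two vertices must be adjacent. A connected graph on two vertices is $K_2$, and both of its vertices are universal, so $d(S)=2$. The only point needing care, and the main one, is the connectivity claim when the graph has exactly two vertices. Connectivity of $\Gamma(S)$ holds by \cite[Theorem 2.3]{AL}, and it passes to $\GE(S)$ under $\eta$ by 2.11. Alternatively, one can argue directly: the non-socle class $[x]_E$ is adjacent to $[s]_E$, since $s$ annihilates $\mathfrak m$. Either route gives $d(S)=2$ in this case, completing the dichotomy.
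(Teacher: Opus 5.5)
Your proof is correct and matches the paper's argument: $[s]_E=\Soc S\setminus\{0\}$ is always universal, Lemma \ref{lem:unique-univ} excludes a second universal vertex unless $|V(\GE(S))|=2$, and in the two-vertex case connectivity (or simply the universality of $[s]_E$) gives $\GE(S)\cong K_2$. The paper's proof also notes that for local rings core-domination coincides with universality, but that remark is only used later in Proposition \ref{prop:DE}, not for the lemma itself.
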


\begin{proof}
By Theorem \ref{thm:core} with $n=1$ we have $C(\Gamma(S))=V(\Gamma(S))$, so for local rings
core-domination in $\GE(S)$ is the same as universality. If $|V(\GE(S))|=1$, the unique vertex
is vacuously universal. If $|V(\GE(S))|=2$, then $\GE(S)$ being connected gives $\GE(S)\cong K_2$
and both vertices are universal. If $|V(\GE(S))|>2$, then by Lemma \ref{lem:unique-univ} the
only universal vertex is $[s]_E=\Soc S\setminus\{0\}$.
\end{proof}

\begin{proposition}\label{prop:DE}
Let $R$ be as in Notation \ref{not:prod}. Then
\[
   D(\GE(R))=\Bigl\{\bigl[\textstyle\sum_{i\in S}s_ie_i\bigr]_E:\
   \emptyset\neq S\subseteq\{1,\dots,k\},\ 0\neq s_i\in\Soc R_i\Bigr\}
   \cup\bigl\{[\varepsilon_j]_E:1\le j\le m\bigr\}\cup\Delta ,
\]
where $\Delta=\emptyset$ unless $k=1$ and $|V(\GE(R_1))|=2$, in which case $\Delta$ consists of
the second vertex of $\GE(R_1)$. Consequently
\[
   \bigl|D(\GE(R))\bigr|=2^{\,k}-1+m+\delta,\qquad
   \delta=\begin{cases}1,&k=1\text{ and }|V(\GE(R_1))|=2,\\0,&\text{otherwise.}\end{cases}
\]
In particular $|D(\GE(R))|$ is finite.
\end{proposition}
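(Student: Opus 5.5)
By Lemma~\ref{lem:DinC} and Corollary~\ref{cor:core-transfer}, every element of $D(\GE(R))$ has the form $[x]_E$ with $x\in C(\Gamma(R))$. By Theorem~\ref{thm:core}, this means either $x$ is a non-zero nilpotent or $x=\varepsilon_jc$ with $c\in K_j^{\times}$. So the plan is to decide, for each such representative, whether its class is adjacent to every other core class, and then to count the resulting classes using 2.10: the class $[x]_E$ is determined by the tuple $(\ann_{T_t}(x_t))_t$.

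First I settle the easy inclusions. For $c\in K_j^{\times}$, the annihilator of $\varepsilon_jc$ does not depend on $c$, so all of $\varepsilon_jK_j^{\times}$ forms the single class $[\varepsilon_j]_E$. This class annihilates $N(R)$ and every $\varepsilon_{j'}R$ with $j'\neq j$, so it is core-dominating; since $C\setminus\{v\}$ excludes the vertex itself, no condition $|K_j|=2$ is needed, unlike in Proposition~\ref{prop:Dgamma}. Next take $x=\sum_{i\in S}s_ie_i$ with $\emptyset\neq S\subseteq\{1,\dots,k\}$ and $0\ne s_i\in\Soc R_i$. As in Proposition~\ref{prop:Dgamma}, such an $x$ annihilates all core vertices, so its class lies in $D(\GE(R))$. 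Its annihilator is $\mathfrak m_i$ on $S$ and the full factor elsewhere. Hence these classes depend only on $S$ and are pairwise distinct, which gives $2^k-1$ vertices. None of them equals some $[\varepsilon_j]_E$, because nilpotency is detected by the annihilator here (the $K_j$-component of the annihilator is $0$ only for the class $[\varepsilon_j]_E$).

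The main step is the converse for a nilpotent $x$. I will show that if some $x_i\in\mathfrak m_i\setminus\Soc R_i$, then we are in the $\Delta$ case. Choose $y_i\in\mathfrak m_i$ with $x_iy_i\neq0$.
\begin{enumerate}
\item[(a)] If $x$ has another non-zero coordinate, then $[y_ie_i]_E\neq[x]_E$, because the two elements have different annihilator components at that coordinate. This contradicts core-domination.
\item[(b)] So $x=x_ie_i$. For every $z\in\mathfrak m_i$ with $[z]_E\neq[x_i]_E$ in $R_i$, the elements $ze_i$ and $x$ lie in different classes, so $x_iz=0$. Thus $[x_i]_E$ is a universal vertex of $\GE(R_i)$ different from the socle class. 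By Lemma~\ref{lem:unique-univ}, $|V(\GE(R_i))|=2$, and the proof of that lemma gives $x_i^2\neq0$.
\item[(c)] If $k\ge2$, take $i'\ne i$ and $0\neq s\in\Soc R_{i'}$. Then $y=x_ie_i+se_{i'}$ is a nilpotent core vertex, its class differs from $[x]_E$ at coordinate $i'$, and $xy=x_i^2e_i\neq0$. This is a contradiction, so $k=1$.
\item[(d)] When $k=1$, the nilpotent core classes of $R$ are exactly the classes $[ze_1]_E$ with $[z]_E\in V(\GE(R_1))$, and all of them are annihilated by the $\varepsilon_j$. So the second vertex of $\GE(R_1)$ does give a core-dominating class, and this is precisely $\Delta$.
\end{enumerate}

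Outside this degenerate situation every $x_i$ lies in $\Soc R_i$, so $[x]_E$ is one of the listed classes. Adding up the three families gives $2^k-1+m+\delta$, with $\delta$ counting the extra vertex. That vertex is distinct from $[s_1e_1]_E$ because $x_1\notin\Soc R_1$, and it is nilpotent, so it is not any $[\varepsilon_j]_E$. Finiteness is then immediate. I expect step (b), reducing to the local universality of Lemma~\ref{lem:unique-univ}, together with the check of which classes are genuinely distinct, to be the delicate point.
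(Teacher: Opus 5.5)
Your proof is correct and follows essentially the same route as the paper. Both arguments identify the core classes, show that the $[\varepsilon_j]_E$ and the $2^k-1$ socle classes are core-dominating, rule out non-socle nilpotent classes by exhibiting a non-adjacent core class whose annihilator differs in another coordinate, and handle $k=1$ through the universal vertices of $\GE(R_1)$ and Lemma~\ref{lem:unique-univ}.

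The only difference is the order of steps. The paper eliminates $k\ge 2$ directly with $y=y_ie_i+s_le_l$, where $y_i$ is any element with $x_iy_i\neq0$, so it does not need $x_i^2\neq0$ there. You first pass through universality in $\GE(R_i)$ to obtain $x_i^2\neq0$ and then use $y=x_ie_i+se_{i'}$; both routes are valid.
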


\begin{proof}
By Theorem \ref{thm:core} and by Corollary \ref{cor:core-transfer}, the core of $\Gamma_E(R)$  consists
of the classes contained in $N(R)\setminus\{0\}$ together with the $m$ classes
$[\varepsilon_j]_E$.

We first show that the classes $[\varepsilon_j]_E$ are always core-dominating. Indeed, the only core
elements not annihilated by $\varepsilon_jc$ are the elements $\varepsilon_jc'$, and these
all lie in $[\varepsilon_jc]_E$. Note that no condition on $|K_j|$ occurs here and this is the
point at which $\Gamma_E(R)$ differs from $\Gamma(R)$
(see Proposition \ref{prop:Dgamma}).

Next we show that the socle classes are core-dominating. If
$0\neq x\in\prod_i\operatorname{Soc}R_i\times 0$, then $x$ annihilates the whole core, so
$[x]_E$ is core-dominating. Any two elements in $\prod_i\operatorname{Soc}R_i\times 0$  are equivalent under $\approx$ if and only if
they have the same support $S\subseteq\{1,\dots ,k\}$, because
$\operatorname{ann}(x)=\prod_{i\in S}\mathfrak m_i\times\prod_{i\notin S}R_i\times\prod_jK_j$;
this gives $2^{k}-1$ distinct classes.

We now show that no further classes occur when $k\ge 2$. Let $x\in N(R)\setminus\{0\}$ with
$x_i\notin\operatorname{Soc}R_i$ for some $i\le k$, choose $y_i\in\mathfrak m_i$ with
$x_iy_i\neq 0$, and fix $l\le k$ with $l\neq i$. If $x_l\neq 0$, put $y=y_ie_i$; then
$\operatorname{ann}(y)_l=R_l\neq\operatorname{ann}(x_l)=\operatorname{ann}(x)_l$, so
$[y]_E\neq[x]_E$, while $xy=x_iy_ie_i\neq 0$. So, $[x]_E$ is a core vertex not	adjacent to the core vertex $[y]_E.$
If $x_l=0$, put
$y=y_ie_i+s_le_l$ with $0\neq s_l\in\operatorname{Soc}R_l$; then
$\operatorname{ann}(y)_l=\mathfrak m_l\neq R_l=\operatorname{ann}(x)_l$, so again
$[y]_E\neq[x]_E$, while $xy=x_iy_ie_i\neq 0$. Thus, in both the cases $[x]_E$ is a core vertex not
adjacent to the core vertex $[y]_E$, so $[x]_E\notin D(\Gamma_E(R))$.

Finally, we deal with the case $k=1$. Here $N(R)=\mathfrak m_1\times 0$ and every $x\in N(R)$ automatically
annihilates all the $\varepsilon_jc$. Moreover,
$\operatorname{ann}_R(x_1e_1)=\operatorname{ann}_{R_1}(x_1)\times\prod_{j}K_j$, so the classes
of $\Gamma_E(R)$ inside $N(R)\setminus\{0\}$ correspond bijectively with
adjacency, to the vertices of $\Gamma_E(R_1)$. Hence their contribution to
$D(\Gamma_E(R))$ is $d(R_1)=1+\delta$ by Lemma \ref{lem:dS}. The case $k=0$ gives only the $m$ classes $[\varepsilon_j]_E$, in agreement with $(2^{0}-1)+m=m$.

\end{proof}

\begin{proposition}\label{prop:DA}
Let $R$ be as in Notation \ref{not:prod} and let $P_i=P(R_i)$ be as in Lemma
\ref{lem:socorbits}. Then $D(\GA(R))$ contains the $m$ classes $[\varepsilon_j]_A$ together
with all associate classes of non-zero elements of $\Soc R_1\times\cdots\times\Soc R_k\times0
\times\cdots\times0$, and the latter are $\prod_{i=1}^{k}(1+P_i)-1$ in number. Hence
\[
   \bigl|D(\GA(R))\bigr|\;\ge\;\prod_{i=1}^{k}\bigl(1+P_i\bigr)-1+m .
\]

\end{proposition}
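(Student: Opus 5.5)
The argument mirrors Proposition \ref{prop:DE}, with associate classes in place of annihilator classes, and has three steps.

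\emph{The core of $\GA(R)$.} By Theorem \ref{thm:core} and Corollary \ref{cor:core-transfer}, $C(\GA(R))$ consists of the associate classes $[x]_A$ with $x\in N(R)\setminus\{0\}$, together with the classes $[\varepsilon_jc]_A$ for $c\in K_j^{\times}$. Since $c$ is a unit of $K_j$, the element $\varepsilon_jc$ equals $u\varepsilon_j$ for the unit $u$ having $c$ in coordinate $k+j$ and $1$ elsewhere. Hence $[\varepsilon_jc]_A=[\varepsilon_j]_A$, and these contribute exactly $m$ core vertices.

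\emph{The listed classes are core-dominating.} For $[\varepsilon_j]_A$, note that $\varepsilon_j$ annihilates $N(R)$, because $N(R)$ has zero coordinates in the field positions. It also annihilates $\varepsilon_{j'}$ for $j'\neq j$. So every core vertex other than $[\varepsilon_j]_A$ is adjacent to it.

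For a non-zero $x\in\Soc R_1\times\cdots\times\Soc R_k\times0\times\cdots\times0$, the element $x$ annihilates all of $N(R)$, since $\Soc R_i\cdot\m_i=0$. It also annihilates every $\varepsilon_j$, by disjointness of supports. Thus $[x]_A$ is adjacent to every core vertex different from itself. Adjacency in $\GA(R)$ is well defined on representatives, since $ab=0$ implies $(ua)(vb)=0$ for units $u,v$.

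\emph{Counting the socle classes.} By 2.10, $x\sim y$ if and only if $x_i\sim y_i$ in each $R_i$ (and trivially in the zero coordinates). So the associate classes of elements of $\prod_i\Soc R_i\times 0$ correspond bijectively to tuples $(c_1,\dots,c_k)$, where each $c_i$ is an associate class of $R_i$ contained in $\Soc R_i$. In each factor there are exactly $1+P_i$ such classes: the class $\{0\}$ together with the $P_i$ classes inside $\Soc R_i\setminus\{0\}$, which by Lemma \ref{lem:socorbits}(1) are the punctured lines. Removing the all-zero tuple leaves $\prod_{i=1}^k(1+P_i)-1$ non-zero classes.

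These socle classes lie in $N(R)$, whereas the classes $[\varepsilon_j]_A$ do not lie in $N(R)$, so the two families are disjoint. This gives the stated lower bound.

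The only step needing care is the counting: one must check that associate classes in a product really factor coordinatewise, including the zero coordinates where the class is $\{0\}$, and that $P_i$ may be infinite. In that case the inequality still holds, read in cardinal arithmetic. I expect no real obstacle, because the statement is only a lower bound and we make no attempt to show that nothing else is core-dominating.
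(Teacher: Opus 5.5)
Your proposal is correct and follows essentially the same route as the paper. You identify the core of $\GA(R)$ via Theorem \ref{thm:core} and Corollary \ref{cor:core-transfer}, check that $[\varepsilon_j]_A$ and the socle classes annihilate every other core vertex, and count the socle classes coordinatewise as $\prod_{i}(1+P_i)-1$ using Lemma \ref{lem:socorbits}. Your remarks that the two families are disjoint and that the bound still holds when some $P_i$ is infinite are small points the paper leaves implicit.
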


\begin{proof}
By Theorem \ref{thm:core} and Corollary \ref{cor:core-transfer}, the core of $\GA(R)$ consists
of the associate classes contained in $N(R)\setminus\{0\}$ together with the classes
$[\varepsilon_jc]_A= \varepsilon_jK_j^\times=[\varepsilon_j]_A$ for $j=1, \ldots, m$. It follows from Corollary \ref{cor:core-transfer} that  $[\varepsilon_j]_A \in C(\Gamma_A(R))$  for $j=1, \ldots, m$. We notice that $[\varepsilon_j]_A$ is adjacent to $[\varepsilon_s]_A$ for $ s \neq j$ and $[\varepsilon_j]_A$ is adjacent to $[x]_A$ for all $x \in N(R).$ So, by Theorem \ref{thm:core} and Corollary \ref{cor:core-transfer} it follows that $[\varepsilon_j]_A \in D(\Gamma_A(R))$ for $j=1, \ldots, m$. 

 Next, we note that if $0\neq x\in\prod_i\Soc R_i
\times0$, then $x$ annihilates the whole core, so $[x]_A\in D(\GA(R))$.
We count these socle classes. As $R^{\times}=\prod_iR_i^{\times}\times\prod_jK_j^{\times}$
acts coordinatewise and the coordinates of $x$ outside its support are $0$, we have
$[x]_A=\prod_{i\in S}[s_i]_A$ for $x=\sum_{i\in S}s_ie_i$ with $0\neq s_i\in\Soc R_i$.  Note that there are $1+P_i$ choices in each coordinate where $P_i$ non-zero choices  come from  $[s_i]_A$  ($0\neq s_i\in\Soc R_i$) by Lemma \ref{lem:socorbits} and one choice comes from $[0]_A$, the associate class corresponding to $0$ in $R_i.$ Since, $[0]_A \notin \Gamma_A(R)$ we see that there are $\prod_{i=1}^{k}(1+P_i)-1$ elements coming from $\prod_i\Soc R_i
\times0$.
Hence,  $ |D(\GA(R))| \geq \prod_{i=1}^{k}\bigl(1+P_i\bigr)-1+m.$

\end{proof}
Recall that in Theorem \ref{thm:gor-nat} we proved that the natural map $\varphi:V(\GA(R))\to V(\GE(R))$,  is a
graph isomorphism if and only if $R$ is Gorenstein. The question of whether an arbitrary isomorphism $\GA(R)\cong\GE(R)$ forces  the natural map $\varphi$ to be an isomorphism and  the ring $R$ to be Gorenstein is answered in the next theorem. This is first of our two main results.
\begin{theorem}\label{thm:mainA}
Let $R$ be an Artinian ring. Then the following are equivalent.
\begin{enumerate}
\item[(1)] $R$ is Gorenstein;
\item[(2)] $\bigl|D(\GA(R))\bigr|=\bigl|D(\GE(R))\bigr|$;
\item[(3)] $\GA(R)\cong\GE(R)$ as graphs.
\item[(4)] the natural map $\varphi:V(\GA(R))\to V(\GE(R))$, $\varphi([a]_A)=[a]_E$, is a
graph isomorphism.
\end{enumerate}
\end{theorem}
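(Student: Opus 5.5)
We have to prove the equivalence of four conditions. The plan is to run the cycle (1)$\Rightarrow$(4)$\Rightarrow$(3)$\Rightarrow$(2)$\Rightarrow$(1), leaning on Theorem \ref{thm:gor-nat} and the counts in Section \ref{sec:count}. If $R$ is a field there are no non-zero zero-divisors, so all three graphs are empty and every statement holds trivially. We may therefore assume $R$ is as in Notation \ref{not:prod}.

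The implication (1)$\Rightarrow$(4) is exactly Theorem \ref{thm:gor-nat}. The implication (4)$\Rightarrow$(3) is trivial. For (3)$\Rightarrow$(2), Definition \ref{def:core} shows that any graph isomorphism carries the core onto the core, and it preserves adjacency. Hence $D(\cdot)$ is a graph invariant, as noted in Definition \ref{def:D}, and $|D(\GA(R))|=|D(\GE(R))|$. This equality is meaningful because Proposition \ref{prop:DE} shows the right-hand side is finite.

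The main step is (2)$\Rightarrow$(1), which I would prove by contrapositive. Suppose $R$ is not Gorenstein. Then some $R_{i_0}$ has $\dim\Soc R_{i_0}\ge2$, so $k\ge1$, and Lemma \ref{lem:socorbits}(2) gives $P_{i_0}\ge3$. Since every $P_i\ge1$, Proposition \ref{prop:DA} yields
\[
|D(\GA(R))|\ \ge\ \prod_{i=1}^{k}(1+P_i)-1+m\ \ge\ 4\cdot 2^{k-1}-1+m=2^{k+1}-1+m .
\]
Proposition \ref{prop:DE} gives $|D(\GE(R))|=2^k-1+m+\delta\le 2^k+m$. Because $2^{k+1}-1\ge 2^k$ for $k\ge1$, this only yields $|D(\GA(R))|\ge|D(\GE(R))|$, so I need a strict inequality. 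When $\delta=0$ the bound is $2^k-1+m<2^{k+1}-1+m$, so the inequality is strict.

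The remaining case, $\delta=1$, is where I expect the real work. Here $k=1$, $R_1=R_{i_0}$, and $|V(\GE(R_1))|=2$, so both sides could equal $3+m$. To rule this out I would sharpen either the upper bound or the lower bound. The simplest route is probably to show that $\delta=1$ cannot occur when $R_1$ is non-Gorenstein. If $|V(\GE(R_1))|=2$, then by Lemma \ref{lem:unique-univ} the other vertex $[x]_E$ satisfies $x^2\ne0$. Moreover, every $y\notin[x]_E$ lies in $\Soc R_1$, with $x+\Soc R_1\setminus\{0\}$ contained in $[x]_E$ (all of this appears in that proof). So $\mathfrak m_1=[x]_E\cup\Soc R_1$ and $x\cdot\Soc R_1=0$.

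I would then look for a contradiction with $\dim\Soc R_1\ge2$. One way is to compare with $\Gamma_A$: counting the associate classes of non-zero socle elements plus at least one further class shows $|D(\GA(R))|\ge P_1+1+m\ge 4+m>3+m$. The point is that, under $\delta=1$, a class inside $[x]_E$ may be core-dominating in $\GA$, and since $P_1\ge3$ the socle classes alone already number at least $3$. Concretely, I would argue that the associate classes making up $[x]_E$ each annihilate every socle class and every $[\varepsilon_j]_A$. A class $[x']_A\subseteq[x]_E$ then fails to be core-dominating only because of other classes in $[x]_E$. Even without such a class, the count $\ge P_1+m$ must be compared with $3+m$, which fails only if $P_1=3$. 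In that case I would show directly that the $\GE$ count forces $\mathfrak m_1^2=0$, since $[x]_E$ would then consist of elements with $x^2\neq0$ but also $x\in\ann(x')$ for $x'\sim x$. That contradicts $x^2\ne0$, giving $\delta=0$ and the strict inequality. Either way, $|D(\GA(R))|>|D(\GE(R))|$ whenever $R$ is not Gorenstein, which completes the cycle.
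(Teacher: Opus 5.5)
Your cycle (1)$\Rightarrow$(4)$\Rightarrow$(3)$\Rightarrow$(2) is correct and matches the paper, as does the $\delta=0$ case of (2)$\Rightarrow$(1). The $\delta=1$ case, however, rests on an arithmetic slip and then on a false claim.

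When $k=1$ and $\delta=1$, Proposition \ref{prop:DE} gives $|D(\GE(R))|=2^1-1+m+1=2+m$, not $3+m$. More generally, your own displayed bounds already give strictness: $|D(\GA(R))|-|D(\GE(R))|\ge(2^{k+1}-1+m)-(2^k+m)=2^k-1\ge1$ for $k\ge1$. So there is no hard case. The strict inequality is immediate, exactly as in the paper, which for $k=1$ simply writes $P_1+m\ge3+m>2+m\ge1+m+\delta$.

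The argument you sketch for this nonexistent case would also fail, because $\delta=1$ does occur with $R_1$ non-Gorenstein. Take $R=R_1=\Z_2[x,y]/(x^3,xy,y^2)$:
\begin{itemize}
\item $\Soc R_1=(x^2,y)$ has dimension $2$, so $R_1$ is not Gorenstein and $P_1=3$.
\item Every $z\in\mathfrak m_1\setminus\Soc R_1$ has $\ann(z)=\Soc R_1$, so $\GE(R_1)\cong K_2$ and $\delta=1$.
\item Yet $\mathfrak m_1^2=(x^2)\neq0$.
\end{itemize}
Hence your claims that $\delta=1$ is impossible for non-Gorenstein $R_1$, and that the count forces $\mathfrak m_1^2=0$, are false. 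Your extra core-dominating associate class inside $[x]_E$ need not exist either. Here $[x]_E$ splits into $[x]_A$ and $[x+y]_A$, which are non-adjacent since $x(x+y)=x^2\neq0$.

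In this example $|D(\GA(R_1))|=3>2=|D(\GE(R_1))|$, just as the plain count predicts. Replace your last two paragraphs by the one-line strict inequality above and the proof is complete.
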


\begin{proof}
If $R$ is a field then all four statements hold trivially, the two graphs being empty. So, we
may assume $R$ is not a field and use Notation \ref{not:prod}.

(1)$\Rightarrow$(3). If $R$ is Gorenstein then $\varphi$ is a graph isomorphism by Theorem
\ref{thm:gor-nat}.

(3)$\Rightarrow$(2). Immediate, since $D(\cdot)$ is a graph invariant by Definition
\ref{def:D}.

(2)$\Rightarrow$(1). Assume $R$ is not Gorenstein. As every field is Gorenstein and a product
is Gorenstein exactly when each factor is, some factor $R_j$ with $j\le k$ is non-Gorenstein. Since $R_j$ is non-Gorenstein non-reduced local ring, by Lemma \ref{lem:socorbits} we get $P_j =P(R_j) \geq 3$. We show
$|D(\GA(R))|>|D(\GE(R))|$, which contradicts (2). By Propositions \ref{prop:DE} and
\ref{prop:DA} it suffices to show
\[
   \prod_{i=1}^{k}\bigl(1+P_i\bigr)-1+m\;>\;2^{\,k}-1+m+\delta .
\]
Suppose first $k\ge2$. Then $\delta=0$, and using $P_i\ge1$ for $i\neq j$ together with
$P_j\ge3$ we get $\prod_i(1+P_i)\ge(1+P_j)\cdot2^{\,k-1}\ge4\cdot2^{\,k-1}=2^{\,k+1}$, so the
left-hand side is at least $2^{\,k+1}-1+m>2^{\,k}-1+m$, as required. Now suppose $k=1$; then
$j=1$ and $\delta\le1$, so the left-hand side equals $P_1+m\ge3+m>2+m\ge2^{1}-1+m+\delta$.

(1)$\iff$(4) follows from Theorem \ref{thm:gor-nat}. This proves the theorem.
\end{proof}

\section{The question of Anderson and LaGrange}\label{sec:lagrange}

Anderson and LaGrange in \cite[Question 2.12]{ALB} and \cite[Question
4.10]{LaG16}, ask which rings satisfy $\Gamma(R)\cong\GE(R)$. They show that if $\Gamma_E(R)$ is finite then, $\Gamma(R)\cong\GE(R)$ if and only if $R$ is Boolean ring or  $R \in \{ \mathbb Z_4, ~ \Z_2[x]/(x^2) \}.$  They also show that for reduced rings, $\Gamma(R)\cong\GE(R)$ if and only if $R$ is a Boolean ring. In this section we answer this question 
for Artinian rings.  The following lemma easily follows from \cite[Theorem 2.9]{ALB} but we prove it here for the sake of completeness.
\begin{lemma}\label{lem:eta-iso}
Let $R$ be a commutative ring with $Z^{*}(R)\neq\emptyset$. Then $\eta:\Gamma(R)\to\GE(R)$ is
a graph isomorphism if and only if $|[a]_E|=1$ for every $a\in Z^{*}(R)$.
\end{lemma}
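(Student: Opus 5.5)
The plan is to show that $\eta$ is automatically a surjective homomorphism of graphs, so it is an isomorphism exactly when it is a bijection. One subtlety is the precise adjacency convention in $\GE(R)$. Distinct classes $[a]_E\neq[b]_E$ are adjacent iff $ab=0$. This is well defined by 2.5: if $ab=0$, $a'\approx a$ and $b'\approx b$, then $b\in\ann(a)=\ann(a')$, so $a'\in\ann(b)=\ann(b')$, giving $a'b'=0$. In particular, adjacency of two classes does not depend on the chosen representatives.

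First, assume $|[a]_E|=1$ for every $a\in Z^{*}(R)$. I will check that $\eta$ maps $Z^{*}(R)$ into $V(\GE(R))$: every element of $[a]_E$ has annihilator $\ann(a)\neq 0$ and is nonzero, so it is a non-zero zero-divisor. Hence $\eta$ is surjective onto the set of classes, and it is injective because each class is a singleton. For $a\neq b$ in $Z^{*}(R)$ we then have $[a]_E\neq[b]_E$, so $a$ and $b$ are adjacent in $\Gamma(R)$ iff $ab=0$ iff $[a]_E$ and $[b]_E$ are adjacent in $\GE(R)$. Thus $\eta$ is a bijection preserving and reflecting adjacency, i.e.\ a graph isomorphism.

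Conversely, if $\eta$ is a graph isomorphism, it is in particular injective on vertices. If some $[a]_E$ contained $b\neq a$, then $\eta(a)=\eta(b)$ would contradict injectivity. So $|[a]_E|=1$ for all $a\in Z^{*}(R)$.

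I expect no real obstacle here. The only point needing care is the bookkeeping about which maps count as graph isomorphisms: the statement concerns the specific map $\eta$, not an abstract isomorphism. I also need to note that the hypothesis $Z^{*}(R)\neq\emptyset$ only ensures that the graphs are nonempty, so that the statement is meaningful.
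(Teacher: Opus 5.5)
Your proposal is correct and follows essentially the same argument as the paper. Singleton classes make $\eta$ a bijection that preserves and reflects adjacency, and conversely injectivity of $\eta$ forces every class to be a singleton; your additional checks (that adjacency of classes is well defined, via 2.5, and that $[a]_E\subseteq Z^{*}(R)$ so that $\eta(b)$ makes sense for $b\in[a]_E$) supply details the paper leaves implicit.
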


\begin{proof}
If every class is a singleton then $\eta$ is bijective, and for $a\neq b$ we have $ab=0$ if
and only if $[a]_E\neq[b]_E$ are adjacent; so $\eta$ is an isomorphism. Conversely, suppose $\eta$
is an isomorphism. If $a,b\in Z^{*}(R)$ are elements of $[a]_E$ then we have  $\ann(a)=\ann(b).$  Note that $\eta(a)=\eta(b)$ and as $\eta$ is isomorphism we have $a=b$. This shows that $|[a]_E|=1.$
\end{proof}

\begin{theorem}\label{thm:mainB}
Let $R$ be an Artinian ring which is not a field. Then the following are equivalent.
\begin{enumerate}
\item[(1)] $\Gamma(R)\cong\GE(R)$ as graphs;
\item[(2)] $\eta:\Gamma(R)\to\GE(R)$ is a graph isomorphism;
\item[(3)] every annihilator class of $R$ is a singleton;
\item[(4)] $R\cong\Z_2^{\,n}$ for some $n\ge2$, or $R\cong\Z_4$, or $R\cong\Z_2[x]/(x^2)$.
\end{enumerate}
\end{theorem}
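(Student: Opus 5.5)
The plan is to prove the cycle (4)$\Rightarrow$(3)$\Rightarrow$(2)$\Rightarrow$(1)$\Rightarrow$(4). The equivalence (2)$\Leftrightarrow$(3) is Lemma \ref{lem:eta-iso}, and (2)$\Rightarrow$(1) is trivial. For (4)$\Rightarrow$(3), we check the three rings directly. In $\Z_2^{\,n}$ an element $x$ is determined by $\ann(x)=\prod_{t}\{y_t : x_ty_t=0\}$, which records exactly the support of $x$. In $\Z_4$ and $\Z_2[x]/(x^2)$ there is only one non-zero zero-divisor. So every class is a singleton.

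The substance is (1)$\Rightarrow$(4), which runs through counting core-dominating vertices. Suppose $\Gamma(R)\cong\GE(R)$. By Definition \ref{def:D}, $|D(\Gamma(R))|=|D(\GE(R))|$, and this is finite by Proposition \ref{prop:DE}. Proposition \ref{prop:Dgamma} gives $|D(\Gamma(R))|=\prod_i|\Soc R_i|-1+t$, so each $\Soc R_i$ is finite. Since $\Soc R_i$ is a nonzero $k_i$-vector space, each residue field $k_i$ is finite.

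An Artinian local ring with finite residue field is finite, because $\mathfrak m_i^{j}/\mathfrak m_i^{j+1}$ is a finite-dimensional $k_i$-space and $\mathfrak m_i$ is nilpotent. This handles the non-reduced factors but not the field factors $K_j$. For those I would argue directly. If some $K_j$ were infinite, then $R\cong R'\times K_j$ with $R'\neq0$, since $R$ is not a field. In $\Gamma(R)$, the core vertices $\varepsilon_jc$ form an infinite clique-free set all sharing the same neighbourhood. Alternatively, compare $|V|$ and use that $\GE(R)$ has at most finitely many classes when the other factors are finite: annihilators in $R$ are products of annihilators in the factors, and a field has only the two ideals $0$ and $K_j$, so $\GE(R)$ is finite while $\Gamma(R)$ is infinite. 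That second route is cleaner. First reduce to all $R_i$ finite as above, then note that $V(\GE(R))$ is finite because each factor contributes finitely many possible annihilators, while $\Gamma(R)\supseteq\{\varepsilon_jc\}$ is infinite, which contradicts the isomorphism. Hence $R$ is finite.

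Once $R$ is finite, $\GE(R)$ is finite, and we invoke \cite[Corollary 2.10]{ALB}: for finite $\GE(R)$, $\Gamma(R)\cong\GE(R)$ forces $R$ Boolean or $R\in\{\Z_4,\Z_2[x]/(x^2)\}$. A finite Boolean ring that is not a field is $\Z_2^{\,n}$ with $n\ge2$, which gives (4). The main obstacle I expect is the finiteness step for the field factors, since Proposition \ref{prop:Dgamma} only controls them through $t$. I would resolve it with the cardinality comparison of vertex sets described above, checking carefully that $V(\GE(R))$ is finite whenever all $R_i$ are finite, using 2.10.
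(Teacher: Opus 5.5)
Your proposal is correct and follows the paper's proof essentially step for step. It uses the same cycle of implications and the invariance $|D(\Gamma(R))|=|D(\GE(R))|$ together with Propositions \ref{prop:Dgamma} and \ref{prop:DE} to make each $\Soc R_i$, and hence each $R_i$, finite. It then rules out an infinite field factor by comparing the finitely many annihilator ideals (so $\GE(R)$ is finite) with the infinitely many vertices $\varepsilon_jc$, which is exactly the paper's Claim 2, and it finishes with \cite[Corollary 2.10]{ALB}. The ``clique-free'' first alternative is unnecessary, since you never derive a contradiction from it, so dropping it in favour of the vertex-count route is the right call.
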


\begin{proof}
 It is easy to verify (4)$\Rightarrow$(3). Equivalence  $(3) \Leftrightarrow$(2) follows from Lemma \ref{lem:eta-iso}. Implication 
  $(2) \Rightarrow$(1) is obvious. It remains to verify 
(1)$\Rightarrow$(4). We assume $\Gamma(R)\cong\GE(R)$ as graphs. We use Notation \ref{not:prod} and  recall that  $R\;\cong\;R_1\times\cdots\times R_k\times K_1\times\cdots\times K_m$ where $R_i$'s are Artinian local rings and $K_j$'s are fields.

\underline{Claim 1} $R_i$ is finite for $1 \leq i \leq k  $: Since $D(\cdot)$ is a graph invariant,
$|D(\Gamma(R))|=|D(\GE(R))|$, and the right-hand side is finite by Proposition \ref{prop:DE}.
By Proposition \ref{prop:Dgamma} we have $|D(\Gamma(R))|=\prod_{i=1}^k|\Soc R_i|-1+t$, so every
$\Soc R_i$ is finite. As $\Soc R_i$ is a non-zero $k_i$-vector space for each $i$ with  $1 \leq i \leq k  $ we find that   $k_i$ is finite field. Since each $R_i$ is an Artinian local ring with finite residue field $k_i$ we obtain $R_i$ is finite.

\underline{Claim 2} $R$ is finite: Consider the collection $\{\ann(x) \mid x \in Z^*(R)\}$ of annihilator ideals of $R.$ 

 By 2.10 any annihilator ideal $\ann(x)$ of $R$ is the product 
$\ann(x)=\prod_t\ann_{T_t}(x_t)$. For each $1 \leq i \leq k,$  $R_i$ is finite by claim 1 and so has finitely many ideals, while each field $K_j$ has exactly two annihilator ideals, namely $K_j$ and $0$. Hence, $R$ has
only finitely many annihilator ideals and $\GE(R)$ is a finite graph. Since $\Gamma(R)\cong\GE(R)$ we see that 
$\Gamma(R)$ is finite, i.e., $Z^{*}(R)$ is finite. If some $K_j$ were infinite, then $n\ge2$ because $R$
is not a field, and the infinitely many elements $\varepsilon_jc$ with
$c\in K_j\setminus\{0\}$ would all be non-zero zero-divisors, a contradiction. So every $K_j$
is finite for each $j=1, \ldots, m$ and $R$ is finite.

 By  \cite[Corollary 2.10]{ALB} we see that  $R$ is a Boolean ring or $R\cong\Z_4$, or $R\cong\Z_2[x]/(x^2)$. Note that $R$ is not a field by assumption. By claim 2, $R$ is a finite ring that is not a field. So, if $R$ is a Boolean ring then $R\cong\Z_2^{\,n}$ for some $n\ge2$. This proves (4).
\end{proof}


\subsection*{Statement on the use of AI} Generative AI tools were used for editorial assistance, drafting refinement, and grammar checking during the preparation of this manuscript. All underlying ideas, mathematical proofs, and conceptual work were developed solely by the authors.

\subsection*{Acknowledgement}
The first author would like to thank the Science and Engineering Research Board (SERB,
Government of India) for financial assistance under the scheme CRG/2022/002184.


\begin{thebibliography}{99}

\bibitem{ALB} D. F. Anderson and J. D. LaGrange, \emph{Commutative Boolean monoids, reduced
rings, and the compressed zero-divisor graph}, J. Pure Appl. Algebra \textbf{216} (2012) (7)
1626--1636.

\bibitem{LaG16} D. F. Anderson and J. D. LaGrange, \emph{Some remarks on the compressed zero-divisor graph},
J. Algebra \textbf{447} (2016) 297--321.

\bibitem{ALS} D. F. Anderson, R. Levy and J. Shapiro, \emph{Zero-divisor graphs, von Neumann
regular rings, and Boolean algebras}, J. Pure Appl. Algebra \textbf{180} (2003) 221--241.

\bibitem{AL} D. F. Anderson and P. S. Livingston, \emph{The zero-divisor graph of a
commutative ring}, J. Algebra \textbf{217} (1999) 434--447.

\bibitem{Ax} M. Axtell, N. Baeth and J. Stickles, \emph{Cut structures in zero-divisor graphs
of commutative rings}, J. Commut. Algebra \textbf{8}(2) (2016) 143--171.

\bibitem{Bass} H. Bass, \emph{On the ubiquity of Gorenstein rings}, Math. Z. \textbf{82}
(1963) 8--28.

\bibitem{Beck} I. Beck, \emph{Coloring of commutative rings}, J. Algebra \textbf{116} (1988)
208--226.

\bibitem{BH} W. Bruns and J. Herzog, \emph{Cohen--Macaulay Rings}, Cambridge Studies in
Advanced Mathematics, vol. 39, Cambridge University Press, Cambridge, 1993.

\bibitem{CSSS} J. Coykendall, S. Sather-Wagstaff, L. Sheppardson and S. Spiroff, \emph{On zero
divisor graphs}, Progress in Commutative Algebra 2, 241--299, Walter de Gruyter, Berlin, 2012.

\bibitem{DJS} A. Duric, S. Jevdenic and N. Stopar, \emph{Categorial properties of compressed
zero-divisor graphs of finite commutative rings}, J. Algebra Appl. \textbf{20}(5) (2021)
2150069, 16 pp.

\bibitem{Kadu} G. S. Kadu, \emph{On the problem of zero-divisors in Artinian rings}, Comm.
Algebra \textbf{49}(11) (2021) 4957--4969.

\bibitem{LaG07} J. D. LaGrange, \emph{Complemented zero-divisor graphs and Boolean rings},
J. Algebra \textbf{315} (2007) 600--611.

\bibitem{LaG08} J. D. LaGrange, \emph{The cardinality of an annihilator class in a von Neumann
regular ring}, Int. Electron. J. Algebra \textbf{4} (2008) 63--82.



\bibitem{Mulay} S. B. Mulay, \emph{Cycles and symmetries of zero-divisors}, Comm. Algebra
\textbf{30}(7) (2002) 3533--3558.


\bibitem{Red} S. P. Redmond,  \emph{Central Sets and Radii of the Zero-Divisor Graphs of Commutative Rings}, Comm. Algebra, \textbf{34}(7), (2006) 2389-2401.




\bibitem{SW} S. Spiroff and C. Wickham, \emph{A zero divisor graph determined by equivalence
classes of zero divisors}, Comm. Algebra \textbf{39}(7) (2011) 2338--2348.

\bibitem{TLW} G. Tang, G. Lin and Y. Wu, \emph{Associate class graph of zero-divisors of a
commutative ring}, J. Algebra Appl. \textbf{19}(08) (2020) 2050155.

\end{thebibliography}
\end{document}